\documentclass[11 pt,final]{article}

\usepackage{amsmath,amssymb,amsthm,amscd,amsfonts}
\usepackage{epsfig,pinlabel}
\usepackage[hmargin=1.25in, vmargin=1in]{geometry}
\usepackage[font=small,format=plain,labelfont=bf,up,textfont=it,up]{caption}
\usepackage{subcaption}
\usepackage{stmaryrd}
\usepackage{verbatim}
\usepackage{type1cm}
\usepackage{url}
\usepackage{calc}
\usepackage[all,cmtip]{xy}
\usepackage{graphicx}
\usepackage{multicol}
\usepackage{pstricks}
\usepackage{relsize}
\usepackage{float}

\setlength{\parindent}{0in}
\setlength{\parskip}{1.8ex}

\newcommand{\urlwofont}[1]{\urlstyle{same}\url{#1}}

\newcommand{\nc}{\newcommand}
\nc{\nt}{\newtheorem}
\nc{\dmo}{\DeclareMathOperator}

\theoremstyle{plain}
\newtheorem{theorem}{Theorem}[section]
\newtheorem{maintheorem}{Theorem}
\newtheorem{prop}[theorem]{Proposition}
\newtheorem{lemma}[theorem]{Lemma}

\newtheorem{corollary}[theorem]{Corollary}

\theoremstyle{definition}

\theoremstyle{remark}


\DeclareMathOperator{\Mod}{Mod}

\dmo{\SMod}{SMod}
\dmo{\PMod}{PMod}
\dmo{\SHomeo}{SHomeo}
\dmo{\SI}{\mathcal{SI}}
\dmo{\SSp}{SSp}
\dmo{\PSp}{PSp}
\DeclareMathOperator{\IA}{IA}



\newcommand\Z{\ensuremath{\mathbb{Z}}}




\nc{\p}[1]{\noindent {\bf #1.}}
\nc{\margin}[1]{\marginpar{\scriptsize #1}}


\nc{\PartialIBases}{\mathfrak{IB}}
\nc{\PartialIBasesEx}{\widehat{\mathfrak{IB}}}
\nc{\PartialBases}{\mathfrak{B}}
\nc{\Building}{\mathfrak{T}}
\nc{\height}{\ensuremath{\text{ht}}}
\nc{\Poset}{\mathfrak{P}}
\nc{\Field}{\mathbb{F}}
\nc{\Link}{\ensuremath{\text{Link}}}
\nc{\Star}{\ensuremath{\text{Star}}}
\nc{\Aut}{\ensuremath{\text{Aut}}}

\nc{\SymTorelli}{\ensuremath{\mathcal{SI}}}
\nc{\BTorelli}{\ensuremath{\mathcal{BI}}}
\dmo{\Braid}{\ensuremath{B}}
\dmo{\PureBraid}{\ensuremath{PB}}
\nc{\Hyper}{\ensuremath{\iota}}

\nc{\BigFreeProd}{\mathop{\mbox{\Huge{$\ast$}}}}

\nc{\Quotient}{\ensuremath{\mathcal{Q}}}
\nc{\QuotientEx}{\ensuremath{\widehat{\mathcal{Q}}}}

\nc{\Presentation}[2]{\ensuremath{\text{$\langle #1$ $|$ $#2 \rangle$}}}
\nc{\SpGen}{\ensuremath{S_{\text{Sp}}}}
\nc{\SpRel}{\ensuremath{R_{\text{Sp}}}}
\nc{\QGen}{\ensuremath{S_{\mathcal{Q}}}}
\nc{\QRel}{\ensuremath{R_{\mathcal{Q}}}}
\nc{\PBs}{\ensuremath{T}}
\nc{\Qs}{\ensuremath{\overline{s}}}

\dmo{\PB}{PB}
\nc{\BIredg}{\mathcal{BI}_{2g+1}^{\text{red}}}
\nc{\BI}{\mathcal{BI}}
\dmo{\D}{D}
\dmo{\Stab}{Stab}
\dmo{\Surger}{Surger}
\nc{\I}{\mathcal{I}}

\nc{\spanmap}{span}

\nc{\genbygen}[2]{\premonoid{#1}{#2}}
\nc{\premonoid}[2]{#1 \circledcirc #2}
\nc{\monoid}[2]{#1 \odot #2}

\nc{\raag}{A_\Gamma}
\nc{\raagdelt}{A_\Delta}
\nc{\autraag}{\mathrm{Aut}(\raag)}
\nc{\outraag}{\mathrm{Out}(A_\Gamma)}
\nc{\autraagdelt}{\mathrm{Aut}(A_\Delta)}
\nc{\glk}{\mathrm{GL}(k,\mathbb{Z})}
\nc{\gln}{\mathrm{GL}(n,\mathbb{Z})}
\nc{\glkdelt}{\mathrm{GL}(k |\Delta| ,\mathbb{Z})}
\nc{\gldelt}{\mathrm{GL}(|\Delta| ,\mathbb{Z})}
\nc{\zkdelt}{\mathbb{Z}^{k|\Delta|}}
\nc{\aut}{\mathrm{Aut}}
\nc{\out}{\mathrm{Out}}
\nc{\join}{\mathcal{J}}
\nc{\pc}{\mathrm{PC}}
\nc{\lk}{\mathrm{lk}}
\nc{\st}{\mathrm{st}}
\nc{\inn}{\mathrm{Inn}}

\nc{\pia}{\Pi \mathrm{A}}
\nc{\ppia}{\mathrm{P \Pi A}}
\nc{\epia}{\mathrm{E \Pi A}}
\nc{\ptor}{\mathcal{PI}}
\nc{\pbc}{\mathfrak{B}^\pi}
\nc{\cay}{\mathrm{Cay}}
\nc{\rev}{\mathrm{rev}}
\nc{\autfn}{\mathrm{Aut}(F_n)}

\include{diagram}
\bibliographystyle{plain}

\title{\vspace{-30pt} A generating set for the palindromic Torelli group}

\author{Neil J. Fullarton\vspace{-6pt}}

\begin{document}

\newcounter{enumi_saved}

\maketitle

\vspace{-23pt}
\begin{abstract}A \emph{palindrome} in a free group $F_n$ is a word on some fixed free basis of $F_n$ that reads the same backwards as forwards. The \emph{palindromic automorphism group} $\pia_n$ of the free group $F_n$ consists of automorphisms that take each member of some fixed free basis of $F_n$ to a palindrome; the group $\pia_n$ has close connections with hyperelliptic mapping class groups, braid groups, congruence subgroups of $\gln$, and symmetric automorphisms of free groups. We obtain a generating set for the subgroup of $\pia_n$ consisting of those elements that act trivially on the abelianisation of $F_n$, the \emph{palindromic Torelli group} $\ptor_n$. The group $\ptor_n$ is a free group analogue of the hyperelliptic Torelli subgroup of the mapping class group of an oriented surface. We obtain our generating set by constructing a simplicial complex on which $\ptor_n$ acts in a nice manner, adapting a proof of Day--Putman \cite{DayPutman}. The generating set leads to a finite presentation of the principal level 2 congruence subgroup of $\gln$. \end{abstract}

\section{Introduction} Let $F_n$ be the free group of rank $n$ on some fixed free basis $X$. The palindromic automorphism group of $F_n$, denoted $\pia_n$, consists of automorphisms of $F_n$ that take each member of $X$ to some palindrome, that is, a word on $X$ that reads the same backwards as forwards. Collins \cite{Collins} introduced the group $\pia_n$ and proved that it is finitely presented, giving an explicit presentation. Glover--Jensen \cite{GloverJensen} obtained further results about $\pia_n$, utilising a contractible subspace of the auter space of $F_n$ on which $\pia_n$ acts cocompactly, with finite stabilisers. For instance, they calculate that the virtual cohomological dimension of $\pia_n$ is $n-1$. The group $\pia_n$ is a free group analogue of the hyperelliptic mapping class group of an oriented surface; we develop this analogy later in this introduction.

In this paper, we are primarily concerned with the intersection of $\pia_n$ with the Torelli subgroup of $F_n$, that is, the subgroup of automorphisms of $\pia_n$ that act trivially on the abelianisation of $F_n$. We denote this intersection by $\ptor_n$, and refer to it as the \emph{palindromic Torelli group} of $F_n$. Little appears to be known about the group $\ptor_n$: Collins \cite{Collins} first observed that it is non-trivial, and Jensen--McCammond--Meier \cite[Corollary 6.3]{JensenMcCammondMeier} showed that $\ptor_n$ is not of finite homological type for $n \geq 3$. In Section 2, we introduce non-trivial members of $\ptor_n$ ($n \geq 3$) known as \emph{doubled commutator transvections} and \emph{separating $\pi$-twists}. The main theorem of this paper establishes that these generate $\ptor_n$.

\begin{maintheorem}\label{mainthm}The group $\ptor_n$ ($n \geq 3$) is generated by doubled commutator transvections and separating $\pi$-twists.
\end{maintheorem}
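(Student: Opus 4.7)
Following the excerpt's hint that the argument adapts Day--Putman \cite{DayPutman}, the overall plan is to build a simplicial complex $\PartialIBases_n$ on which $\pia_n$ acts with understandable stabilisers, prove that $\PartialIBases_n$ is simply connected, and then apply the standard principle that a group acting on a simply connected complex with connected quotient is generated by vertex stabilisers together with elements realising a spanning tree of edge identifications in the quotient. Applying this to the restricted action of the finite-index subgroup $\ptor_n$ will then yield an explicit generating set.

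First I would set up the complex. A natural candidate for $\PartialIBases_n$ is a palindromic analogue of the complex of partial bases: vertices are palindromic primitives (or, more precisely, their $\ptor_n$-equivalence classes, tracked through the abelianisation $H_1(F_n;\Z)=\Z^n$), and simplices are spanned by subsets of a common palindromic free basis of $F_n$. Then $\pia_n$ acts on $\PartialIBases_n$, and the action descends on $H_1$ to an action of the palindromic symplectic image of $\pia_n$ on an associated combinatorial object whose quotient one can analyse directly. Vertex stabilisers in $\pia_n$ should decompose, up to the usual transvection/inner factor, as $\pia_{n-1}$ acting on a complementary palindromic subbasis, which allows an induction on $n$ to reduce statements about $\ptor_n$ to statements about $\ptor_{n-1}$ plus explicit "new" generators that appear as $n$ grows.

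The main obstacle, and the step I expect to occupy the bulk of the work, is proving that $\PartialIBases_n$ (or an appropriate subcomplex/augmentation $\PartialIBasesEx_n$) is simply connected. In \cite{DayPutman} connectivity of the complex of partial bases is the technical heart, handled by a surgery/sweep-through argument on edge-paths that replaces generic basis moves by controlled Nielsen transformations. The palindromic refinement is strictly more delicate: every intermediate move must preserve the palindromic-basis condition, so standard Nielsen moves are unavailable and must be replaced by palindromic substitutes (doubled transvections, conjugations by palindromes, hyperelliptic-type swaps). Showing that such a restricted move set still suffices to contract loops in $\PartialIBases_n$ is where I expect the delicate combinatorics of palindromes and the interplay with level-$2$ congruence data to enter.

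With simple connectivity in hand, the final step is a bookkeeping task: identify generators of $\Stab_{\ptor_n}(v)$ for a base vertex $v$ (using the inductive decomposition together with the base case $\ptor_2$, which is understood), and identify edge-reversing elements for each $\ptor_n$-orbit of edges in $\PartialIBases_n/\pia_n$. I would expect the former to produce the separating $\pi$-twists supported on complementary palindromic subbases, and the latter, arising from interchanging two palindromic primitives, to produce the doubled commutator transvections; verifying that every generator produced by the action principle lies in the subgroup generated by these two families, and conversely, completes the proof of Theorem~\ref{mainthm}.
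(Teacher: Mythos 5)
Your overall architecture matches the paper's: the complex of partial $\pi$-bases, simple connectivity, an Armstrong/Brown-type generation principle, and an induction on $n$ via a Birman-type decomposition of the vertex stabiliser. However, there is a genuine gap at the base of your induction. You propose to ground the induction in $\ptor_2$ (which is trivial), but the inductive step from $n-1$ to $n$ requires applying the action principle at level $n$, and this needs the quotient $\pbc_n/\ptor_n$ to be simply connected --- a hypothesis you never address and which in fact \emph{fails} for $n=3$. The paper identifies $\pbc_n/\ptor_n$ with the complex of partial $\pi$-bases of $\Z^n$ and shows via Charney's results that it is only $(n-3)$-connected, so Armstrong's theorem applies only for $n \geq 4$; Corollary~\ref{not1conn} shows explicitly that $\pbc_3/\ptor_3$ is not simply connected, precisely because the vertex stabilisers of $\ptor_3$ generate only the normal closure of the doubled commutator transvections, which Proposition~\ref{necgen} shows is a proper subgroup. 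Consequently the $n=3$ case cannot be obtained from the action on $\pbc_3$ at all: the paper must instead derive a finite presentation of $\Gamma_3[2]$ by hand (via an augmented complex $\mathcal{Y}$, an ``even division algorithm,'' and Brown's presentation theorem) and lift its relators through $1 \to \ptor_3 \to \ppia_3 \to \Gamma_3[2] \to 1$. Without some substitute for this step your induction has no valid starting point, and --- as Proposition~\ref{necgen} makes clear --- the separating $\pi$-twists would never enter your generating set, since they arise exactly from the one relator (relator 10) invisible to the vertex-stabiliser argument.

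A secondary inaccuracy: you predict that vertex stabilisers yield the separating $\pi$-twists and edge-identification elements yield the doubled commutator transvections. In the paper's argument it is essentially the reverse and there are no edge elements at all: Armstrong's theorem (for $n\geq 4$) gives generation by vertex stabilisers alone, the Birman kernel $\mathcal{J}_n(1)\cap\ptor_n$ contributes only doubled commutator transvections (Lemma~\ref{jcap}), and the separating $\pi$-twists appear solely through the $n=3$ base case. This is not fatal to your outline in itself, but it reflects that the mechanism producing the second family of generators is the part your proposal is missing.
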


In order to prove Theorem~\ref{mainthm}, we establish finite generating sets for the subgroups of $\pia_n$ consisting of automorphisms that fix each member of some specified subset of the free basis $X$. These generating sets, which are given precisely in the statement of Proposition~\ref{palgens}, are obtained by utilising Stallings' graph folding algorithm.

Let $\Gamma_n[2]$ denote the \emph{principal level 2 congruence subgroup of $\gln$}, that is, is the kernel of the surjection $\gln \to \mathrm{GL}(n, \Z/2)$ that reduces matrix entries mod 2. In Section 2, we discuss a short exact sequence with kernel the palindromic Torelli group and quotient $\Gamma_n[2]$. For $1 \leq i \neq j \leq n$, let $S_{ij} \in \Gamma_n[2]$ be the matrix that has 1s on the diagonal and 2 in the $(i,j)$ position, with 0s elsewhere, and let $O_i \in \Gamma_n[2]$ differ from the identity only in having $-1$ in the $(i,i)$ position. The following corollary of Theorem \ref{mainthm} provides a finite presentation of $\Gamma_n[2]$ for $n \geq 4$. 

\begin{corollary}\label{congpres}The principal level 2 congruence group $\Gamma_n[2]$ ($n \geq 4$) is generated by $$\{ S_{ij}, O_i \mid 1 \leq i \neq j \leq n \},$$ subject to the defining relators

\begin{multicols}{2}
\begin{enumerate}
    \item ${O_i}^2$
    \item $[O_i,O_j]$
    \item $(O_iS_{ij})^2$
    \item $(O_jS_{ij})^2$
    \item $[O_i, S_{jk}]$
    \item $[S_{ki}, S_{kj}]$
    \item $[S_{ij},S_{kl}]$
    \item $[S_{ji},S_{ki}]$
    \item $[S_{kj},S_{ji}]{S_{ki}}^{-2}$
    \item $(S_{ij}{S_{ik}}^{-1}S_{ki}S_{ji}S_{jk}{S_{kj}}^{-1})^2$
\end{enumerate}
\end{multicols} where $1 \leq i,j,k,l \leq n$ are pairwise different.
  \end{corollary}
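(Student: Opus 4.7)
The approach is to combine Theorem~\ref{mainthm} with Collins' finite presentation of $\pia_n$ via the short exact sequence $1 \to \ptor_n \to E_n \to \Gamma_n[2] \to 1$ discussed in Section~2, where $E_n \leq \pia_n$ denotes the preimage of $\Gamma_n[2]$ under the abelianisation map $\pia_n \to \gln$. A standard Reidemeister--Schreier computation applied to Collins' presentation \cite{Collins} of $\pia_n$ yields a finite presentation of the finite-index subgroup $E_n$, in which lifts of the generators $S_{ij}$ and $O_i$ are furnished by the palindromic transvection $x_i \mapsto x_j x_i x_j$ and the inversion $x_i \mapsto x_i^{-1}$, respectively. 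Adding to this presentation a relator for each generator of $\ptor_n$ supplied by Theorem~\ref{mainthm} --- i.e.\ one for each doubled commutator transvection and each separating $\pi$-twist, written as a word in the generators of $E_n$ using Proposition~\ref{palgens} --- produces a finite presentation of the quotient $\Gamma_n[2]$.

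The remaining task is a Tietze-transformation simplification of this preliminary presentation to the concise one stated in the corollary. The easy half is to verify, by direct matrix computation in $\gln$, that each of the relators (1)--(10) is trivial in $\Gamma_n[2]$; this shows that the group $G$ defined by the stated presentation surjects onto $\Gamma_n[2]$. The harder half is to show this surjection is injective, equivalently that (1)--(10) normally generate every relator in the preliminary presentation. I would do this family by family, exploiting the natural $S_n$-symmetry in the indices to cut down the case analysis, and would handle the images of doubled commutator transvections and separating $\pi$-twists by expressing them as short iterated commutators in the $S_{ij}$ and $O_i$ before reducing modulo (1)--(10).

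I expect this injectivity step to be the principal obstacle. The images of the generators of $\ptor_n$ in $\Gamma_n[2]$, rewritten in terms of $S_{ij}$ and $O_i$, are somewhat intricate words whose triviality modulo (1)--(10) is not evident at first sight; the commutator identities (6)--(9), together with the single longer relator (10), must carry the weight of this reduction. The hypothesis $n \geq 4$ will enter precisely here, as it guarantees four distinct indices $i,j,k,l$ and so makes available the full force of relator (7) and of the conjugation manoeuvres needed to eliminate auxiliary $S_{kl}$ factors that appear when rewriting.
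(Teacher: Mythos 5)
Your proposal matches the paper's derivation in substance: the paper obtains Corollary~\ref{congpres} from Theorem~\ref{mainthm} by noting $\Gamma_n[2] \cong \ppia_n/\ptor_n$ (your $E_n$ is exactly the pure palindromic automorphism group $\ppia_n$, for which Collins already supplies a presentation, so no Reidemeister--Schreier step is needed), adding as relators the $\Omega_n$-orbits of $\chi_1 = [P_{12},P_{13}]$ and $\chi_2$ --- which normally generate $\ptor_n$ in $\ppia_n$ by Theorem~\ref{mainthm} --- and then applying Tietze transformations, just as you describe. The one small correction is your guess about where $n \geq 4$ enters: it is needed only so that relator (7) is defined (four distinct indices), the cases $n=2,3$ being the separately proved Proposition~\ref{congpres3}.
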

We note that in the proof of Theorem \ref{mainthm} it becomes apparent that not every relator of type 10 is needed. In fact, for each choice of three indices $i$, $j$ and $k$, we need only select one such word (and disregard the others, for which the indices have been permuted).

We also derive the following similar presentation for $\Gamma_n[2]$ when $n = 2$ or $3$, however these are acquired independently of Theorem~\ref{mainthm}. Indeed, the presentation of $\Gamma_3[2]$ is used to obtain a generating set for $\ptor_3$, which forms the base case of an inductive proof of Theorem~\ref{mainthm}.

\begin{prop}\label{congpres3}The principal level 2 congruence group $\Gamma_n[2]$ ($n= 2,3$) is generated by $$\{ S_{ij}, O_i \mid 1 \leq i \neq j \leq n \},$$ subject to the defining relators in the statement of Corollary~\ref{congpres} of types:
\begin{itemize} \item (1)--(4) for $n=2$,
\item (1)--(6), (8)--(10) for $n=3$. \end{itemize}
\end{prop}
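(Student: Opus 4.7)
The plan is to handle $n=2$ and $n=3$ separately, exploiting in each case that $\Gamma_n[2]$ has finite index $|\GL_n(\Z/2)|$ in $\GL_n(\Z)$. First note that relators of type (7) require four pairwise distinct indices and those of types (5)--(10) require at least three, so for $n=2$ only relators (1)--(4) are non-vacuous while for $n=3$ only type (7) drops out; this matches the shape of the two cases in the statement.

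For $n=2$, I would argue directly. The subgroup $\Gamma_2[2]\cap\SL_2(\Z)$ is the classical principal congruence subgroup $\Gamma(2)$; since $\Gamma(2)/\{\pm I\}=\pi_1(Y(2))$ is free of rank $2$ on the images of $S_{12}$ and $S_{21}$, and since $H^2(F_2;\Z/2)=0$, the central extension by $\langle -I\rangle$ splits, giving $\Gamma_2[2]\cap\SL_2(\Z)\cong F_2\times\langle -I\rangle$. The determinant surjection $\Gamma_2[2]\to\{\pm 1\}$ is split by $O_1$, yielding $\Gamma_2[2]\cong(F_2\times\langle -I\rangle)\rtimes\langle O_1\rangle$; direct matrix computation shows $O_1$ inverts each of $S_{12}$ and $S_{21}$, while $-I=O_1O_2$ is central. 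It is then routine to verify that the abstract group presented by $\{O_1,O_2,S_{12},S_{21}\}$ modulo relators (1)--(4) admits a normal form matching this semidirect product, and is therefore isomorphic to $\Gamma_2[2]$.

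For $n=3$, I would apply a Reidemeister--Schreier procedure to a known finite presentation of $\GL_3(\Z)$ -- for instance, one built from Steinberg's presentation of $\SL_3(\Z)$ augmented by a single involution realising determinant $-1$ -- with Schreier coset representatives drawn from a lift of $\GL_3(\Z/2)$ of cardinality $168$. Each Schreier generator is then rewritten in terms of the $O_i$ and $S_{ij}$, and each Schreier-rewritten relator verified via Tietze transformations to be a consequence of relators of types (1)--(6) and (8)--(10). The principal obstacle is bookkeeping: the raw output of Reidemeister--Schreier is bulky. It should be made manageable by organising the coset representatives along a Bruhat-style decomposition of $\GL_3(\Z/2)$ into double cosets of upper-triangular matrices, so that the Schreier-rewritten relators fall into families corresponding exactly to the ten relator types listed; a computer-algebra check, e.g.\ using the finitely-presented-group routines of GAP, is the natural way to confirm the final simplification.
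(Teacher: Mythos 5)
Your $n=2$ argument is essentially sound, and it is a genuinely different route from the paper's: the paper gets the $n=2$ presentation by observing that $\ptor_2=1$, so $\Gamma_2[2]\cong\ppia_2$, and then reading off Collins' presentation of $\ppia_2=\epia_2\rtimes\Omega^{\pm1}(X)$ under $P_{ij}\mapsto S_{ji}$, $\iota_i\mapsto O_i$; your route through $\Gamma(2)\cong F_2\times\{\pm I\}$ and the determinant splitting reaches the same semidirect-product normal form by classical means. Either way the $n=2$ case is fine.

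The $n=3$ case, however, contains a genuine gap: you have described a computational strategy, not a proof. Reidemeister--Schreier on a presentation of $\GL_3(\Z)$ over $168$ coset representatives certainly terminates in \emph{some} finite presentation of $\Gamma_3[2]$, but the entire content of the proposition is the assertion that the resulting relators reduce, via Tietze transformations, to exactly the families (1)--(6), (8)--(10) --- and that step is precisely what you defer to ``a computer-algebra check''. The delicacy is concentrated in relator (10): Proposition~\ref{necgen} shows that $(S_{ij}{S_{ik}}^{-1}S_{ki}S_{ji}S_{jk}{S_{kj}}^{-1})^2$ is \emph{not} a consequence of relators (1)--(9), so any correct reduction must genuinely produce this one non-obvious relator, and your proposal gives no mechanism or verification for how it emerges from the Schreier rewriting. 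By contrast, the paper proves the $n=3$ case geometrically: it constructs the augmented partial $\pi$-basis complex $\mathcal{Y}$ for $\Z^3$, proves $\mathcal{Y}$ is simply-connected using an ``even division algorithm'' (Theorem~\ref{augment}), and applies Brown's theorem to the action of $\Gamma_3[2]$ on $\mathcal{Y}$ with fundamental domain spanned by $v_1,v_2,v_3,v_1+v_2$. The stabilisers of $v_1,v_2,v_3$ are presented inductively from the $n=2$ case via the split exact sequence of Lemma~\ref{commdiag}, yielding relators (1)--(6), (8), (9), and the edge relations at the fourth vertex $v_1+v_2$ (conjugate to $\Gamma_3[2](v_1)$ by $E_{21}$) are shown to contribute exactly one further independent relator, which rewrites as relator (10). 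If you want to pursue your route, you must either exhibit the explicit reduction of the Schreier-rewritten relators (tracking in particular where relator (10) arises) or supply the verified computer calculation; as written, the claim that the ten families suffice is assumed rather than established.
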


A key tool in the proof of Proposition~\ref{congpres3} is an `even' version of the Division Algorithm for the integers. This is the observation that under certain circumstances, the quotient $q \in \Z$ given when dividing $a \in \Z$ by $b \in \Z$ may be chosen to be even, if we sacrifice control of the sign of the remainder $r \in \Z$. More details of this procedure are given in the proofs of Lemma~\ref{conggens} and Theorem~\ref{augment}.

A similar presentation for $\Gamma_n[2]$ was recently found independently by Kobayashi \cite{Kobayashi}, and was also known to Margalit--Putman \cite{Margalit}. As pointed out by Margalit--Putman, this is a natural presentation for $\Gamma_n[2]$, as relators of types (6)--(9) bear a strong resemblance to the Steinberg relations that hold between the transvections generating $\mathrm{SL}(n, \Z)$ \cite[\S 5]{Milnor}.

\textbf{A comparison with mapping class groups.} While $\pia_n$ is defined entirely algebraically, it may viewed as a free group analogue of a subgroup of the mapping class group of an oriented surface. Let $S_g$ and $S_g^1$ denote the compact, connected, oriented surfaces of genus $g$ with 0 and 1 boundary components, respectively. We shall use $S$ to denote such a surface, with or without boundary. Recall that the \emph{mapping class group} of the surface $S$, denoted $\Mod (S)$, consists of isotopy classes of orientation-preserving self-homeomorphisms of $S$, where isotopies are required to fix any boundary component pointwise at all times. For a self-homeomorphism $f$ of $S$, we denote its isotopy class by $[f]$.

A \emph{hyperelliptic involution} of the surface $S$ is an order 2 homeomorphism of the surface that acts as $-I$ on $H_1(S, \Z)$ \cite[Sections 2 \& 4]{BrendleMargalit}. Let $s$ denote the homeomorphism of $S_g^1$ seen in Figure \ref{1bdry}. By capping the boundary with a disk, the map $s$ induces a homeomorphism of $S_g$, which we also denote $s$, by an abuse of notation. The map $s$ is an example of a hyperelliptic involution of $S_g^1$ (and $S_g$). We note that the mapping class of any hyperelliptic involution in $\Mod(S_g)$ ($g \geq 1$) is conjugate to $[s]$ \cite[Proposition 7.15]{primer}.
\begin{figure}
\centering
\begin{subfigure}{.5\textwidth}
\centering
\def\svgwidth{2.6in}
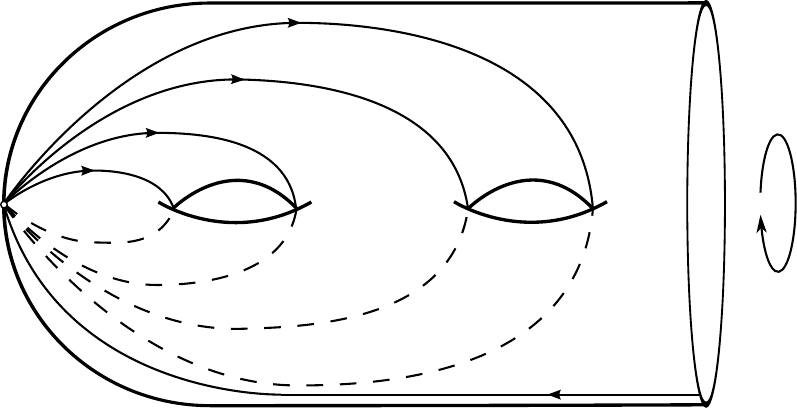
  \caption{}
    \label{1bdry}
\end{subfigure}%
\begin{subfigure}{.5\textwidth}
\centering
\def\svgwidth{2.5in}
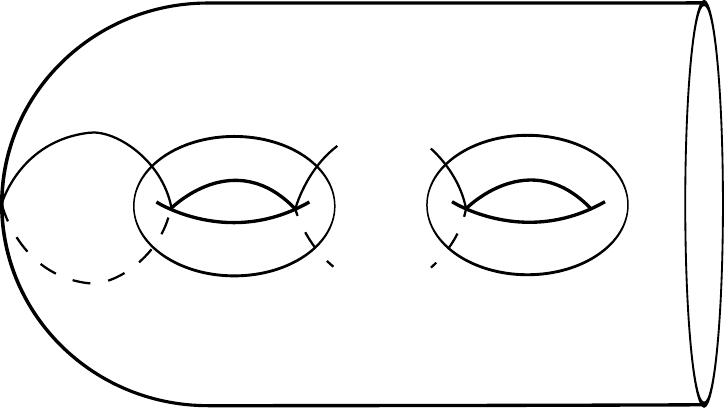
  \caption{}
    \label{chain}
\end{subfigure}
\caption{\\
\textbf{(a)} The involution $s$ rotates the surface by $\pi$ radians. Under the classical Nielsen embedding, we may view the braid group $B_{2g} \leq \SMod(S_g^1)$ as a subgroup of $\pia_{2g} \leq \aut(F_{2g})$, where $F_{2g}$ is the free group on the oriented loops $x_1, \ldots, x_{2g}$.
\\
\textbf{(b)} The standard symmetric chain in $S_g^1$. The Dehn twists about $c_1, \ldots, c_{2g}$ generate $\SMod(S_g^1) \cong B_{2g+1}$.}
\end{figure}

The \emph{hyperelliptic mapping class group of the surface $S_g$}, denoted $\SMod(S_g)$, is the centraliser of $[s]$ in $\Mod(S_g)$. Although $[s] \not \in \Mod(S_g^1)$, as $s$ does not fix the boundary of $S_g^1$, we define the \emph{hyperelliptic mapping class group of $S_g^1$}, denoted $\SMod(S_g^1)$, to be the group of isotopy classes of the centraliser of $s$ in $\mathrm{Homeo}^+(S_g^1)$ \cite[Chapter 9]{primer}.

An obvious analogue of a hyperelliptic involution in $\autfn$ is an order 2 member of $\autfn$ that acts as $-I$ on $H_1(F_n, \Z) = \Z^n$. An example of such an involution in $\autfn$ is the automorphism $\iota$ that inverts each member of the free basis $X$. An analogy between $s$ and $\iota$ is strengthened by two observations. Firstly, Glover--Jensen \cite[Proposition 2.4]{GloverJensen} showed that any hyperelliptic involution in $\autfn$ is conjugate to $\iota$. Secondly, the action of $s$ on $\pi_1(S_g^1) = F_{2g}$, with free basis as seen in Figure \ref{1bdry}, is to invert each member of the free basis, as $\iota$ does. It is easily verified that $\pia_n$ is the centraliser of $\iota$ in $\autfn$ \cite[Section 2]{GloverJensen}, so we may think of $\pia_n$ as being a free group analogue of the hyperelliptic mapping class groups $\SMod(S_g)$ and $\SMod(S_g^1)$.

The comparison between $\pia_n$ and $\SMod(S_g^1)$ is made more precise using the classical Nielsen embedding $\Mod(S_g^1) \hookrightarrow \aut(F_{2g})$. Take the $2g$ oriented loops seen in Figure \ref{1bdry} as a free basis for $\pi_1(S_g^1)$. Observe that $s$ acts on these loops by switching their orientations. In order to use Nielsen's embedding into $\aut(F_{2g})$, we must take these loops to be based on the boundary; we surger using the arc $\mathcal{A}$ to achieve this. The group $\SMod(S_g^1)$ is isomorphic to the braid group $B_{2g+1}$ by the Birman--Hilden theorem \cite{BirmanHilden}, and is generated by Dehn twists about the curves in the standard, symmetric chain on $S_g^1$, seen in Figure \ref{chain}. The Dehn twists about the $2g-1$ curves $c_2, \ldots, c_{2g}$ generate the braid group $B_{2g}$. Taking the loops seen in Figure \ref{1bdry} as our free basis $X$, a straightforward calculation shows that the images of these $2g-1$ twists in $\aut(F_{2g})$ lie in $\pia_{2g}$. Specifically, the twist about $c_{i+1}$ is taken to the automorphism $Q_i$ of the form
\begin{eqnarray*}
 x_i & \mapsto & x_{i+1}, \\
 x_{i+1} & \mapsto & x_{i+1} {x_i}^{-1} x_{i+1}, \\
 x_j &\mapsto & x_j
\end{eqnarray*} for $1 \leq i < 2g$ and $j \neq i, i+1$.
This shows that $\pia_n$ contains the braid group $B_n$ as a subgroup, when $n$ is even. This embedding of $B_n$ is a restriction of one studied by Perron--Vannier \cite{PerronVannier} and Crisp--Paris \cite{CrispParis}. When $n$ is odd, we also have $B_n \hookrightarrow \pia_n$, since discarding $Q_1$ gives a generating set for $B_{2g-1}$ inside $\pia_{2g-1} \leq \aut(F_{2g})$.

\begin{figure}
\centering
\def\svgwidth{2.5in}
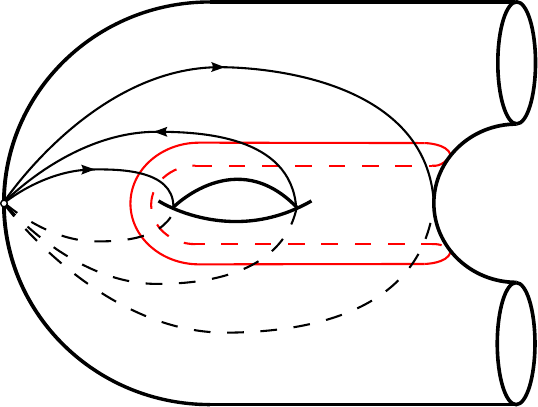
\caption{The Dehn twist about the symmetric, separating curve $C$ maps to a separating $\pi$-twist in $\ptor_{2g}$ under the Nielsen embedding. Note that we only depict a genus one subsurface of $S_g^1$, and that $x_2$ has a different orientation than in Figure~\ref{1bdry}.}
\label{2bdry}
\end{figure}

\textbf{Palindromic and hyperelliptic Torelli groups.} The main focus of our study in this paper is the palindromic Torelli group, $\ptor_n$. This group arises as a natural analogue of a subgroup of $\SMod(S_g^1)$. The \emph{Torelli subgroup} of $\Mod(S_g^1)$, denoted $\I_g^1$, consists of mapping classes that act trivially on $H_1(S_g^1, \Z)$. There is non-trivial intersection between $\I_g^1$ and $\SMod(S_g^1)$; we define $\SI_g^1 := \SMod(S_g^1) \cap \I_g^1$ to be the \emph{hyperelliptic Torelli group}. Brendle--Margalit--Putman \cite{BMP} recently proved a conjecture of Hain \cite{Hain}, also stated by Morifuji \cite{Morifuji}, showing that $\SI_g^1$ is generated by Dehn twists about separating simple closed curves of genus 1 and 2 that are fixed by $s$.

Our generating set for $\ptor_n$ compares favourably with Brendle--Margalit--Putman's for $\SI_g^1$, in the following way. We shall see in Section 2 that any Dehn twist about a symmetric separating curve of genus one that lies in the pre-image of the Nielsen embedding discussed above, maps to a separating $\pi$-twist in $\ptor_{2g}$. In fact, up to conjugation by $\pia_{2g}$, this is the definition of a separating $\pi$-twist. The Dehn twist about the curve $C$ shown in Figure~\ref{2bdry} is an example of such a mapping class. Note that the Dehn twist about $C$ is one of the generators of Brendle--Margalit--Putman. We shall see in Proposition \ref{necgen} that doubled commutator transvections do not suffice to generate $\ptor_n$, so we observe that our generating set involves Brendle--Margalit--Putman's generators in a significant way. Thus, the similarity between $\SI_g^1$ and $\ptor_n$ is not just a superficial comparison of definitions: the Nielsen embedding gives rise to a deeper connection between these two groups.

One way in which the analogy between $\ptor_n$ and $\SI_g^1$ breaks down, however, is their behaviour when $\pia_n$ and $\SMod(S_g^1)$ are abelianised, to $(\Z/2)^3$ and $\Z$ respectively. An immediate corollary of Theorem \ref{mainthm} is that $\ptor_n$ vanishes in the abelianisation of $\pia_n$. In contrast, the image of $\SI_g^1$ in the abelianisation of $\SMod(S_g^1)$ is $4\Z$, which may be shown by calculating the images of Brendle--Margalit--Putman's generators in the abelianisation of $\SMod(S_g^1)$.

\textbf{Palindromes in right-angled Artin groups.} In a forthcoming paper with Anne Thomas \cite{FT14}, we extend Collins' definition of palindromic automorphisms to the right-angled Artin group setting. We obtain generating sets for the analogously defined palindromic automorphism group and palindromic Torelli group of an arbitrary right-angled Artin group. 

\textbf{Approach of the paper.} To prove Theorem \ref{mainthm}, we employ a standard, geometric technique: we find a sufficiently connected complex on which $\ptor_n$ acts with sufficiently connected quotient, and use a theorem of Armstrong \cite{Armstrongorbit} to conclude that $\ptor_n$ is generated by the action's vertex stabilisers. This approach is modelled on a proof of Day--Putman \cite{DayPutman}, which recovers Magnus' finite generating set for the Torelli subgroup of $\aut(F_n)$.

\textbf{Conventions.} We apply functions from right to left. For $g, h \in G$ a group, we let $[g,h] = ghg^{-1}h^{-1}$. In a graph, we denote an edge between vertices $x$ and $y$ by $x-y$. In a group $G$, we will also conflate a relation $P = Q$ with the relator $PQ^{-1}$ when this is unambiguous.

\textbf{Outline of the paper.} In Section 2, the definitions of the palindromic automorphism group and palindromic Torelli group of a free group are given, along with some elementary properties of these groups. In Section 3, we introduce the complex of partial $\pi$-bases of $F_n$, and use it to obtain a generating set for $\ptor_n$. In Section 4, we prove key results about the connectivity of the complexes involved in the proof of Theorem \ref{mainthm}. In Section 5, we obtain a finite presentation for $\Gamma_3[2]$ used in the base case of our inductive proof of Theorem \ref{mainthm}.

\textbf{Acknowledgements.} The author thanks his PhD supervisor Tara Brendle for her guidance, and for introducing him to the palindromic Torelli group. The author is grateful for the hospitality of the Institute for Mathematical Research at Eidgen\"{o}ssische Technische Hochschule Z\"{u}rich, where part of this work was completed. The author also thanks Ruth Charney and Karen Vogtmann for helpful discussions, and Thomas Church, Dan Margalit, Luis Paris, Andrew Putman, Richard Wade and Liam Watson for useful comments on an earlier version of this paper. The author is grateful to the anonymous referee for their numerous constructive comments.

\section{The palindromic automorphism group} Let $F_n$ be the free group of rank $n$, on some fixed free basis $X: = \{x_1, \ldots, x_n \}$. For a word $w = l_1 \ldots l_k$ on $X^{\pm 1}$, let $w^{\rev}$ denote the \emph{reverse} of $w$; that is, we have $ w^{\rev} = l_k \ldots l_1$. Such a word $w$ is said to be a \emph{palindrome} on $X$ if $w^{\rev} = w$. For example, $x_1$, ${x_2}^2$ and $x_2 x_1^{-1} x_2$ are all palindromes on $X$.

An automorphism $\alpha \in \aut(F_n)$ is said to be \emph{palindromic} (with respect to the fixed free basis $X$) if for each $x_i \in X$, the word $\alpha(x_i)$ may be written as a palindrome on $X$. Such automorphisms form a subgroup of $\aut(F_n)$ which we call the \emph{palindromic automorphism group of $F_n$} and denote by $\pia_n$. That $\pia_n$ is a group is easily shown by verifying that $\pia_n$ is the centraliser in $\aut(F_n)$ of the automorphism $\iota$ which inverts each member of $X$. The following proposition gives us information about the form of the palindromes $\alpha(x_i)$.

\begin{prop}\label{oddlengths} Let $\alpha \in \pia_n$ and $x_i \in X$. Then $\alpha(x_i) = w^{\rev} \sigma(x_i)^{\epsilon_i} w$, where $w$ is a word on $X^{\pm1}$, $\sigma$ is a permutation of $X$ and $\epsilon_i \in \{ \pm 1\}$. \end{prop}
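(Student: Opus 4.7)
The plan is to analyse the structure of the reduced word representing $\alpha(x_i) \in F_n$, combining palindromic symmetry with the invertibility of $\alpha$ modulo $2$. First I would verify that the unique reduced word representing $\alpha(x_i)$ is itself palindromic, via a parallel-cancellation argument: if $l_1 \ldots l_k$ is palindromic and $l_r l_{r+1}$ cancels, the palindrome condition $l_j = l_{k+1-j}$ forces $l_{k-r} l_{k+1-r}$ to cancel as well, and performing both cancellations simultaneously (or, in overlap cases near the centre, the coincident single cancellation) yields a shorter palindrome; iterating produces a reduced palindromic form.

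Writing this reduced palindrome as $l_1 \ldots l_k$ with $l_j = l_{k+1-j}$ for all $j$, I would then split on the parity of $k$. If $k = 2m+1$ is odd, setting $w := l_{m+2} \ldots l_{2m+1}$, palindromy gives $l_1 \ldots l_m = w^{\rev}$ and the middle letter is $l_{m+1} = x_j^{\epsilon}$ for some index $j$ and sign $\epsilon \in \{\pm 1\}$, yielding the desired decomposition $\alpha(x_i) = w^{\rev} x_j^{\epsilon} w$.

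The key step, and main obstacle, is to rule out the even case. If $k = 2m$, each letter $l_r$ in the first half is matched with its mirror $l_{k+1-r}$ as the same signed letter $x_j^{\pm 1}$, so for every $j$ the exponent sum of $x_j$ in $\alpha(x_i)$ is twice the sum of signed occurrences of $x_j$ in the first half, hence even. Consequently, the $i$-th column of $\alpha_* \in \gln$ (the image of $\alpha$ under the abelianisation map $\aut(F_n) \to \gln$) vanishes modulo $2$, so the reduction of $\alpha_*$ to $\GL(n, \Z/2)$ is not invertible, contradicting the fact that $\alpha$ is an automorphism.

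Finally, to extract the permutation $\sigma$, I would apply the same parity analysis in the odd case: the modulo $2$ reduction of $\alpha_*$ has $(j,i)$-entry equal to $1$ precisely when the middle letter of $\alpha(x_i)$ is of the form $x_j^{\pm 1}$, since all non-middle contributions pair off evenly. Each column of this $\{0,1\}$-matrix therefore has exactly one non-zero entry; invertibility in $\GL(n, \Z/2)$ then forces exactly one non-zero entry per row, so the assignment $x_i \mapsto x_{j_i}$, where $x_{j_i}^{\epsilon_i}$ is the middle letter of $\alpha(x_i)$, is a bijection, giving the required permutation $\sigma$ of $X$.
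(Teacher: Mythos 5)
Your proof is correct and follows essentially the same route as the paper: both arguments reduce to the observation that the images of $\alpha(x_1),\ldots,\alpha(x_n)$ must generate $(\Z/2)^n$, which rules out even-length palindromes (whose letters pair off, giving zero image mod $2$) and forces the central letters to realise a permutation of $X$. Your additional preliminary step showing that the \emph{reduced} word is itself a palindrome is a correct refinement, but is not needed for the statement, since the parity and central-letter analysis can be run directly on any palindromic representative of $\alpha(x_i)$, as the paper does.
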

\begin{proof}For a palindrome $p = w^{\rev} x_i^{\epsilon_i} w \in F_n$ of odd length ($w \in F_n$, $x_i \in X$, $\epsilon_i \in \{ \pm 1\}$), let $c(p) = x_i$. The following argument is implicit in the work of Collins \cite{Collins}.

Let $\alpha \in \pia_n$. Since $\alpha(X)$ is a free basis, its image under the natural surjection $F_n \to (\Z /2)^n$ must suffice to generate $(\Z/2)^n$. If some $\alpha(x_i)$ is of even length, it will have zero image, and so the image of $\alpha(X)$ could not generate. If $c(\alpha(x_i)) = c(\alpha(x_j))$ for some $i \neq j$, then $\alpha(x_i)$ and $\alpha(x_j)$ will have the same image in $(\Z/2)^n$, and so again $\alpha(X)$ could not generate. \end{proof}

\textbf{Finite generation of $\mathbf{\pia_n}$.} Collins first studied the group $\pia_n$, giving a finite presentation for it. For $i \neq j$, let $P_{ij} \in \pia_n$ map $x_i$ to $x_jx_ix_j$ and fix $x_k$ with $k \neq i$. For each $1 \leq j \leq n$, let $\iota_j \in \pia_n$ map $x_j$ to $x_j^{-1}$ and fix $x_k$ with $k \neq j$. We refer to $P_{ij}$ as an \emph{elementary} palindromic automorphism and to $\iota_j$ as an \emph{inversion}. We let $\Omega^{\pm1}(X)$ denote the group generated by the inversions and the permutations of $X$. The group generated by all elementary palindromic automorphisms and inversions is called the \emph{pure palindromic automorphism group} of $F_n$, and is denoted $\ppia_n$.

Collins showed that $\pia_n \cong \epia_n \rtimes \Omega^{\pm1}(X)$ for $n \geq 2$, where $\epia_n = \langle P_{ij} \rangle$. The group $\Omega^{\pm1}(X)$ acts on $\epia_n$ in the natural way, and a defining set of relations for $\epia_n$ is given by \begin{enumerate}\item $[P_{ik}, P_{jk}] = 1$,
\item $[P_{ij}, P_{kl}] = 1$, and
\item $P_{ij} P_{jk} P_{ik} = P^{-1}_{ik}P_{jk}P_{ij}$,
\end{enumerate}
where $i,j,k,l$ are pairwise different and the obviously undefined relators are omitted in the $n=2$ and $n=3$ cases.

We remark that, as noted by Collins \cite{Collins}, this presentation of $\epia_n$ is very similar to one given for the \emph{pure symmetric automorphism group of $F_n$}, $\mathrm{P}\Sigma A_n$, which consists of automorphisms taking each $x \in X$ to a conjugate of itself. This similarity is not entirely surprising, as we may think of a palindrome $yxy$ as a conjugate $yxy^{-1}$, working `mod 2' ($x,y \in X$). The embedding $B_n \hookrightarrow \pia_n$ discussed in Section 1 bears a striking resemblance to Artin's faithful representation of $B_n$ into $\Sigma A_n$, the full symmetric automorphism group, whose members take each $x \in X$ to \emph{some} conjugate \cite[Corollary 1.8.3]{birman}; this similarity arises via the branched double cover map $S_g^1 \to D_{2g+1}$ \cite[Figure 9.13]{primer}.

\begin{figure}
\centering
\def\svgwidth{5.5in}
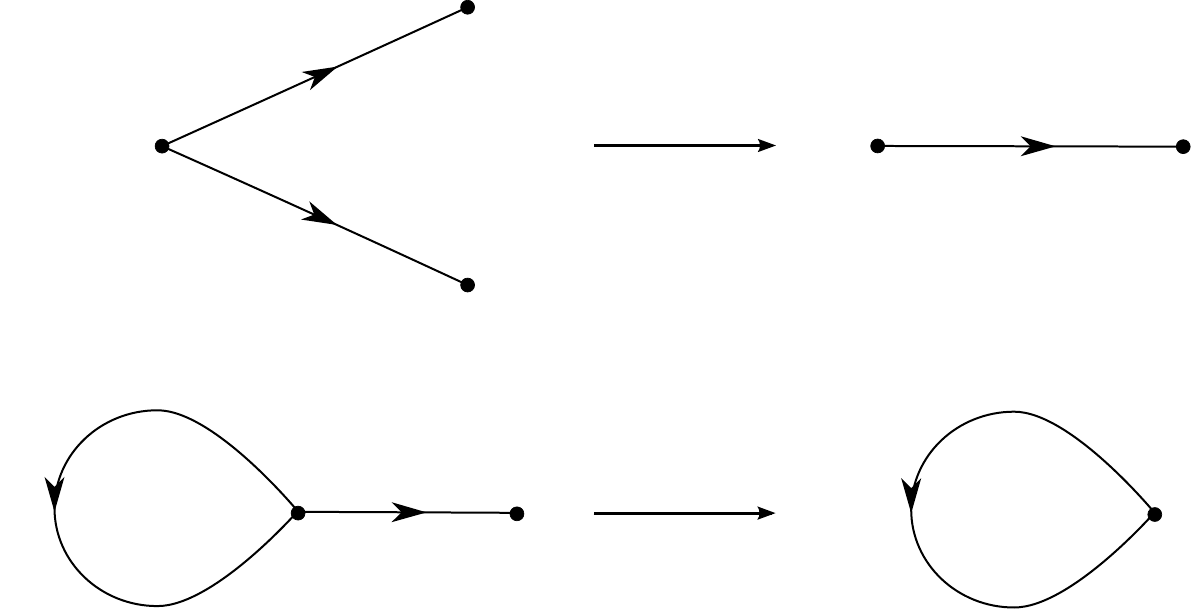
\caption{The two types of folding that may occur for our graph morphism $\phi$. Wade \cite{Wade} refers to the top fold as a type 1 fold, and to the bottom as a type 2 fold. The edges are labelled suggestively: we will demand that $s, t \in T$ and $f_i \not \in T$.}
\label{folds}
\end{figure}

Using graph folding techniques of Stallings, we obtain a new proof of finite generation of $\pia_n$, as well as finding generating sets for certain fixed point subgroups of $\pia_n$. We first introduce the notation and terminology of Wade \cite{Wade} regarding graph folding.

Let $R_n$ denote the wedge of $n$ copies of $S^1$ at a point $o$. We canonically identify $\pi_1(R_n, o)$ with $F_n$ by selecting an orientation of each $S^1$, and labelling the $i$th copy of $S^1$ by $x_i \in X$. Note, we shall let $\bar x_i$ denote the edge obtained by reversing the orientation of $x_i$.

Now, let $Y$ be a finite graph of rank $n$ with basepoint $b$. We will view our graphs as combinatorial objects, rather than topological ones. In particular, morphisms between graphs must take edges to edges, rather than edge-paths. A free basis for the (free) fundamental group $\pi_1(Y,b)$ is obtained in the usual way, by selecting a maximal tree $T$ in $Y$, then choosing an orientation of the edges $f_1, \ldots , f_n$ in $Y$ but not $T$. To be consistent with Wade, we canonically orient an edge $e$ of $T$ by declaring its initial vertex $i(e)$ to be the one closer to the basepoint $b$ under the edge-path metric on $T$.

Suppose $\theta : Y \to R_n$ is a morphism of graphs that induces an isomorphism of fundamental groups. The morphism $\theta$, together with the choice of basepoint $b$, maximal tree $T$ and an ordering $L$ of the (oriented) edges of $Y \setminus T$ form a \emph{branding} of the graph $Y$. A graph $Y$ together with a 4-tuple $\mathcal{G} = (b,T,L, \theta)$ form a \emph{branded graph} with \emph{branding} $\mathcal{G}$.

Each branded graph $Y$ with branding $\mathcal{G} = (b,T,L, \theta)$ yields an automorphism $B_\mathcal{G}~\in~\autfn$, as follows. For each $x_i$ in the free basis $X$ of $F_n$, we have \[ B_{\mathcal{G}} (x_i) = \theta_\ast(y_i) ,\] where $\{ y_1, \dots, y_n \}$ is the free basis of $\pi_1(Y,b)$ arising from the choices of $b$, $T$ and $L$ in the branding $\mathcal{G}$, and $\theta_\ast : \pi_1(Y,b) \to \pi_1(R_n, o)$ is the map induced by $\theta$.

If the morphism $\theta$ maps a pair of edges $e_1$ and $e_2$ with $i(e_1) = i(e_2)$ to the same edge $l$ of $R_n$, then $\theta$ factors through the quotient graph $Y'$ of $Y$ obtained by \emph{folding} $e_1$ and $e_2$ together: that is, the graph obtained by identifying $e_1$ with $e_2$, and also their terminal vertices, $t(e_1)$ and $t(e_2)$, with each other. In particular, if $q : Y \to Y'$ is the quotient map obtained by the folding, then there is a unique graph morphism $\theta ' : Y' \to R_n$ such that $\theta = \theta ' \circ q$. While Stallings considered more general foldings, since we require $\theta$ to induce an isomorphism of fundamental groups, only two types of folding may arise for us, which are shown in Figure~\ref{folds}.

If we insist that the edges $s$ and $t$ seen in Figure \ref{folds} lie in $T$, and that the edge $f_i$ does not, carrying out either type of fold induces a branding $\mathcal{G}'$ of the folded graph $Y'$ (it is non-trivial to verify that the image of $T$ in $Y'$ is a maximal tree; we leave this to Wade). It may also be the case that we wish to carry out a fold of type 1 or type 2, but that $s$ or $t$ does not lie in $T$. Before folding, we must change maximal tree so that the relevant edges lie in the new tree. This defines a new branding $\mathcal{G}''$ of $Y$. In either case, it may be shown via a careful consideration of $\pi_1(Y,b)$ (see \cite[Propositions 3.2, 3.3]{Wade}) that $B_\mathcal{G} = B_{\mathcal{G}'} \cdot W'$ and $B_\mathcal{G} = B_{\mathcal{G}''} \cdot W''$, where $W'$ and $W''$ are specified \emph{Whitehead automorphisms} of $F_n$. These are automorphisms which fix some $x \in X$ and send each $x_i \in X \setminus \{x \}$ to one of $x_i$, $x_ix^{\epsilon_i}$, $x^{\epsilon_i}x_i$ or $x^{\epsilon_i} x_i x^{- \epsilon_i}$ for some $\epsilon_i \in \{ \pm 1 \}$.

Stallings' folding algorithm allows us to repeatedly fold the graph $Y$ and its quotients, beginning with the morphism $\theta : Y \to R_n$, then continuing to fold via $\theta ' : Y' \to R_n$, and so on. This procedure eventually terminates when we exhaust the edges we are able to fold; in this case, Stallings showed that the quotient graph is $R_n$, and so the morphism $\psi : R_n \to R_n$ obtained by repeatedly folding via $\theta$ simply permutes and perhaps inverts the $n$ loops in $R_n$. This folding procedure allows us to write the automorphism $B_{\mathcal{G}}$ we began with as a product of Whitehead automorphisms, and permutations and inversions of $X$.

With the details of folding established, we now put the algorithm to use to find generators for $\pia_n$.

\begin{prop}\label{palgens}Fix $ 0 \leq k \leq n$, and let $\pia_n(k)$ consist of automorphisms which fix $x_1, \ldots, x_k$. (Our convention is that $\pia_n(0) = \pia_n$). A finite generating set for $\pia_n(k)$ is \[ \left [ \Omega^{\pm1}(X) \cap \pia_n(k) \right ] \cup \{P_{ij} \mid i > k \}. \] \end{prop}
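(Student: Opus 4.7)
The plan is to apply the Stallings graph-folding machinery just set up, following the blueprint laid out in the preceding exposition and in Wade's work. Starting from an arbitrary $\alpha\in\pia_n(k)$, the goal is to build a branded graph whose associated automorphism is $\alpha$ and then to fold it down to $R_n$ in such a way that every Whitehead automorphism produced along the way lies in the proposed generating set.

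To construct the branded graph, I would invoke Proposition~\ref{oddlengths} to write $\alpha(x_i)=w_i^{\rev}\sigma(x_i)^{\epsilon_i}w_i$, noting that the hypothesis $\alpha(x_i)=x_i$ for $i\leq k$ forces $w_i=1$, $\sigma(x_i)=x_i$ and $\epsilon_i=1$ in that range. The graph $Y$ will be a wedge at a basepoint $b$ of $n$ edge-loops; the $i$th loop is an edge-path of length $2|w_i|+1$ whose successive labels spell out $\alpha(x_i)$, so that for $i\leq k$ it is just a single edge labelled $x_i$. Each such loop carries a natural palindromic involution fixing its middle edge and swapping its two halves, and I would choose the maximal tree $T$ and edge-ordering $L$ invariant under all of these involutions simultaneously. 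A direct check then shows $B_\mathcal{G}=\alpha$.

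I would then fold $Y$ down to $R_n$, performing folds in mirror pairs dictated by the palindromic involutions. When a fold is available on one half of some loop, its involution-image is available on the other half; carrying out both together produces a Whitehead factor whose net effect is an elementary palindromic automorphism $P_{ab}$ (possibly post-composed with an element of $\Omega^{\pm1}(X)$). This is already visible in the simplest non-trivial example: with $\alpha(x_1)=x_1$ and $\alpha(x_2)=x_1x_2x_1$, two mirror type-2 folds contribute the transvections $x_2\mapsto x_1x_2$ and $x_2\mapsto x_2x_1$, whose composite is exactly $P_{21}$. Iterating reduces $Y$ to $R_n$ and factorises $\alpha$ as a product of $P_{ab}$'s and elements of $\Omega^{\pm1}(X)$. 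Because the loops for $x_1,\ldots,x_k$ are single edges and require no folding, every $P_{ab}$ that appears has $a>k$ and every $\Omega^{\pm1}(X)$ factor fixes $x_1,\ldots,x_k$, yielding exactly the stated generating set.

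The main obstacle is verifying that folds can be paired consistently throughout. The mirror of a type 1 fold need not have the same type as its partner, and the tree-changes used to enable a fold (in the sense of Wade) must themselves be carried out in mirror pairs to preserve involution-invariance of the branding. One must also isolate the case in which the involution fixes a fold; this occurs only at the central edge of a palindromic loop, and there the single fold already produces a palindromic automorphism on its own. Checking these symmetry-preservation properties amounts to a careful case analysis of Wade's two folding propositions under the palindromic involutions, and is where the bulk of the technical work of Proposition~\ref{palgens} lies.
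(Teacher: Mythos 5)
Your proposal is correct and follows essentially the same route as the paper: both realise $\alpha$ as a symmetric morphism from a subdivided wedge of loops to $R_n$ and fold in mirror pairs (the paper's type A and type B 2-folds are exactly your involution-paired folds), with each pair contributing an elementary palindromic automorphism and the residual map $R_n\to R_n$ giving the $\Omega^{\pm 1}(X)$ factor. The technical points you flag --- preserving symmetry through tree changes and handling the self-mirrored fold at the central/bridge edge --- are precisely the issues the paper addresses via its arch/bridge/stem formalism and its degenerate case.
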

\begin{proof} The idea behind this proof was inspired by a proof of Wade \cite[Theorem 4.1]{Wade}. 

We begin by introducing some terminology. Let $\phi : S \to T$ be an isomorphism of finite trees. For a vertex (resp. edge) $r$ of $S$, denote by $r'$ the image of $r$ under $\phi$. Choose a distinguished vertex $v$ of $S$, of valence 1. An \emph{arch of $S$ at $v$} (see Figure \ref{arch}) is the graph formed by gluing $S$ to $T$ along $v$ and $v'$, then for each vertex $r \in S$, adding some (possibly zero) number of edges between $r$ and $r'$ (note, we allow $r=v$). We refer to these new edges as \emph{bridges}. The image of $v$ in the arch forms a natural base point, and any edge with $v$ as one of its endpoints is called a \emph{stem}. By a \emph{wedge of arches} we mean a collection of arches glued together at their base points. Note that each of the trees $S_i$ and $T_i$ of each arch sit inside $Y$ as subgraphs, and $Y$ is the union of these subgraphs, together with any bridges inside each arch.

Let $\theta : Y \to R_n$ be a graph morphism, with $Y$ a wedge of arches. We call $\theta$ \emph{symmetric} if for each edge $s_i$ in each tree $S_i$ in each arch of $Y$, we have $\theta(s_i ') = \theta (\bar {s_i})$. We shall define two new types of folding that we may carry out to any symmetric morphism $\theta : Y \to R_n$, with the resulting morphism $\theta ' : Y' \to R_n$ on the folded graph $Y'$ also being symmetric.

\begin{figure}
\centering
\def\svgwidth{2in}
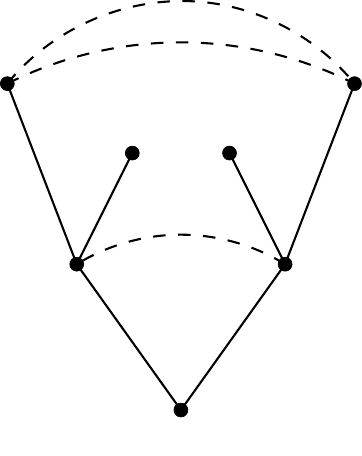
\caption{An example of an arch, with base point $v$. The dashed edges indicate the bridges that have been added to the trees that were glued together at the base point.}
\label{arch}
\end{figure}

Let $\alpha \in \pia_n(k)$. We may realise $\alpha$ as a morphism of graphs $\theta : Z \to R_n$, where $Z$ is the result of subdividing each $S^1$ of $R_n$ into the appropriate number of edges, and `spelling out' the word $\alpha(x_i)$ on the $i$th copy of $S^1$. Precisely, the $j$th edge of the oriented, subdivided $S^1$ corresponding to $\alpha(x_i)$ is mapped to the loop in $R_n$ corresponding to the $j$th letter of $\alpha(x_i)$, correctly oriented. Note that $Z$ is a wedge of arches, and $\theta$ is symmetric by construction. We thus have $\alpha = B_{\mathcal{G}}$ where $\mathcal{G}$ is the branding of $Z$ arising from the maximal tree that excludes the (appropriately ordered) middle subdivided edge of each copy of $S^1$. We now use graph folding to write $\alpha$ as a product of permutations, inversions and elementary palindromic automorphisms.

Let $\theta : Y \to R_n$ be symmetric, for some wedge of arches $Y$, built out of trees $S_i$, $T_i$ ($1 \leq i \leq k$). Since $\theta$ is symmetric, foldings of $Y$ come together in natural pairs. Consider folds of type 1. For instance, if we are able to fold together two edges $h_i \in S_i$ and $h_j \in S_j$ since $\theta(h_i) = \theta(h_j)$ (allowing $i = j$), then we will also be able to fold together $h_i'$ and $h_j'$, as they will also both have the same image under $\theta$, namely $\theta(\bar {h_i}) = \theta (\bar {h_j})$. We call this pair of folds a \emph{type A 2-fold}.

We may also have a sequence of edges $(h_{j-1},h_j,h_{j+1})$ mapped under $\theta$ to the sequence $(\bar x,x,\bar x)$ where $x$ is an oriented edge of $R_n$, $h_{j-1} \in S_i$, $h_{j+1} = h_{j-1} '$ and $h_j$ is a bridge. We fold $h_{j-1}$ and $h_{j+1}$ onto $h_j$, and call this pair of folds a \emph{type B 2-fold}. Such a fold is seen in Figure \ref{bfold}.
  
\begin{figure}
\centering
\def\svgwidth{5.5in}
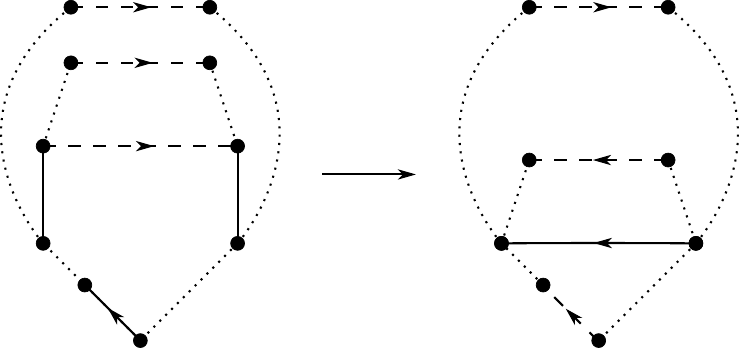
\caption{The two adjacent solid edges are folded onto $f_j$. The dashed edges represent edges excluded from the graph's chosen maximal tree. In order to record what effect this type B 2-fold has on the branded graph's associated automorphism, we must swap $f_j$ into the maximal tree, in place of the stem $s$. }
\label{bfold}
\end{figure}
Doing either of these 2-folds to $Y$ yields another, different wedge of arches, $Y'$, say. A type B 2-fold simply removes an edge of valence one from $S_i$ (and its corresponding edge in $T_i$) by folding it onto a bridge, producing new trees $S_i'$ and $T_i'$ which we use to construct $Y'$ as a wedge of arches. A type A 2-fold similarly alters the trees $S_i$, $S_j$, $T_i$ and $T_j$, producing new trees $S_i'$ and $T_i'$ in a description of $Y'$ as a wedge of arches. The morphism $\theta ' : Y' \to R_n$ induced by the folding of $Y$ is again symmetric: any edges $s_i$ and $s_i'$ that were not folded still satisfy $\theta ' (s_i') = \theta' (\bar{s_i})$ by construction of $\theta '$, but so do the images of any folded edges, given how we decompose $Y'$ as a wedge of arches using the new trees $S_i'$ and $T_i'$.

In order to see what effect these 2-folds have on $\alpha \in \pia_n$, we must keep track of a preferred maximal tree $T$ we define on each wedge of arches $Y$. The edges of $Y$ not in $T$ are the bridges coming from each arch. In order to carry out a type B 2-fold we must swap the bridge $f_j$ (seen in Figure~\ref{bfold}) into the maximal tree. Let $p_{i(f_j)}$ denote the unique reduced path in $T$ joining the base point to the initial vertex of $f_j$. Apart from one degenerate case, which we deal with separately, we may always swap $f_j$ into the maximal tree $T$ by excluding the stem appearing in $p_{i(f_j)}$. Using calculations of Wade \cite[Propositions 3.2, 3.3]{Wade}, it is straightforward to verify that the effect of swapping maximal trees in this way, doing a type B 2-fold, then swapping back to the maximal tree where all bridges are excluded is to carry out an elementary palindromic automorphism $P_{ij}^{\epsilon_k}$ to some members of $X$. Precisely, let $\theta: Y_1 \to R_n$ be a symmetric morphism of graphs, where $Y_1$ has branding $\mathcal{G}_1$ and let $\mathcal{G}_2$ be the induced branding of the graph $Y_2$ obtained by carrying out the above series of tree swaps and folds. Then
\[\phi_{\mathcal{G}_1} = \phi_{\mathcal{G}_2} \cdot P, \] where $\phi_{\mathcal{G}_i}$ is the automorphism of $F_n$ associated to $\mathcal{G}_i$ ($i = 1,2$) and $P$ is a product of elementary palindromic automorphisms.

The only degenerate case of the above is when one (and hence both) of the edges we want to fold onto a bridge is a stem. In this case, we do one of two things. If the bridge is a loop at the base point $v$, we carry out two type 2 folds. Otherwise, we change maximal trees as before then fold one of the stems onto the bridge with a type 1 fold. This causes the other stem to become a loop, around which we fold the bridge using a type 2 fold. As before, the automorphism of $F_n$ associated to these sequences of steps is a product of elementary palindromic automorphisms.

Carrying out a sequence of 2-folds of types A and B eventually produces a map $R_n \to R_n$, and so we complete the folding algorithm by applying the appropriate automorphism from $\Omega^{\pm1}(X)$. Notice that since $\alpha \in \pia_n(k)$, the graph $Z$ we constructed has a single loop at the base point for each $x_i$ ($1 \leq i \leq k$), as $\alpha(x_i) = x_i$, so the first $k$ ordered loops of $R_n$ were not subdivided to form $Z$. Thus, while folding such a graph $Y$, we only need Collins' generators that fix the first $k$ members of the free basis $X$. The proposition is thus proved.
\end{proof}
\begin{corollary}\label{purestabs}The group $\ppia_n(k)$ of pure palindromic automorphisms fixing $x_1, \ldots, x_k$ $(0 \leq k \leq n)$ is generated by the set $\{P_{ij}$, $\iota_i \mid i > k \}$. \end{corollary}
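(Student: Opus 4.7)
The plan is to bootstrap from Proposition~\ref{palgens} using the semidirect product decomposition $\pia_n \cong \epia_n \rtimes \Omega^{\pm 1}(X)$ of Collins. Given $\alpha \in \ppia_n(k) = \ppia_n \cap \pia_n(k)$, Proposition~\ref{palgens} expresses $\alpha$ as a product of letters drawn from $\Omega^{\pm 1}(X) \cap \pia_n(k)$ and $\{P_{ij}^{\pm 1} : i > k\}$.

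Using the semidirect product structure, I would commute every $\Omega^{\pm 1}(X) \cap \pia_n(k)$-factor past all the $P_{ij}^{\pm 1}$-factors to obtain a decomposition $\alpha = p \cdot \omega$ with $p \in \langle P_{ij}^{\pm 1} : i > k \rangle$ and $\omega \in \Omega^{\pm 1}(X) \cap \pia_n(k)$. The essential check is that conjugation by any $\tau \in \Omega^{\pm 1}(X) \cap \pia_n(k)$ sends each generator $P_{ij}$ (with $i > k$) to an element of $\{P_{i'j'}^{\pm 1} : i' > k\}$. This splits into two cases: for a permutation $\sigma$ fixing $\{1,\ldots,k\}$ pointwise, $\sigma P_{ij} \sigma^{-1} = P_{\sigma(i)\sigma(j)}$ with $\sigma(i) > k$; for an inversion $\iota_l$, a direct computation gives $\iota_l P_{ij} \iota_l^{-1} \in \{P_{ij}, P_{ij}^{-1}\}$ according as $l \notin \{i,j\}$ or $l \in \{i,j\}$.

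Finally, since $\alpha$ is pure, its image under the further projection $\Omega^{\pm 1}(X) \to S_n$ (killing the inversion subgroup) is trivial, so $\omega$ has no permutation part and is therefore a product of inversions. As $\omega \in \pia_n(k)$ fixes $x_1, \ldots, x_k$, each such inversion must be some $\iota_l$ with $l > k$. Combined with the expression of $p$ as a product in $\{P_{ij}^{\pm 1} : i > k\}$, this exhibits $\alpha$ as a word in $\{P_{ij}, \iota_i : i > k\}$, completing the proof. The main (modest) obstacle is the conjugation bookkeeping above, which is routine once the $\iota_l$-calculation is recorded.
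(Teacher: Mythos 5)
Your argument is correct and follows the route the paper intends: the paper states this as an immediate corollary of Proposition~\ref{palgens} with no written proof, and your semidirect-product bookkeeping (pushing the $\Omega^{\pm1}(X)\cap\pia_n(k)$ letters to one side via the conjugation formula $\tau P_{ij}\tau^{-1}=P_{\sigma(i)\sigma(j)}^{\pm1}$, then using purity to kill the permutation part) is exactly the verification needed to justify that the generating set of Proposition~\ref{palgens} restricts to the stated one on the pure subgroup.
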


\textbf{The principal level 2 congruence subgroup of $\gln$.} Recall that $\Gamma_n[2]$ denotes the \emph{principal level 2 congruence subgroup} of $\gln$, that is, kernel of the map $\gln \to \mathrm{GL}(n, \Z_2)$ given by reducing matrix entries mod 2. Let $S_{ij}$ be the matrix with 1s on the diagonal, 2 in the $(i,j)$ position and 0s elsewhere, and let $O_i$ be the matrix which differs from the identity matrix only in having a $-1$ in the $(i,i)$ position. The following lemma verifies a well-known generating set for $\Gamma_n[2]$ (see, for example, McCarthy--Pinkall \cite[Corollary 2.3]{McCarthyPinkall}). We include a proof here to introduce the idea of an `even division algorithm', which we utilise in the proof of Theorem \ref{augment}.
\begin{lemma}\label{conggens}The set $\{O_i, S_{ij} \mid 1 \leq i \neq j \leq n \}$ generates $\Gamma_n[2]$. \end{lemma}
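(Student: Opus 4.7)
The strategy is to reduce an arbitrary $A \in \Gamma_n[2]$ to the identity by multiplying on the left and right by the candidate generators, and to induct on $n$; the base case $n = 1$ is immediate, since $\Gamma_1[2] = \{\pm 1\} = \langle O_1 \rangle$. Left multiplication by $S_{ij}$ adds twice row $j$ to row $i$, right multiplication by $S_{ij}$ adds twice column $i$ to column $j$, and each $O_i$ negates a row or a column. Since $A \equiv I \pmod 2$, the diagonal entries of $A$ are odd and the off-diagonal entries are even, and this parity pattern is preserved by every available operation.

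The heart of the argument is the even division algorithm: if $a, b \in \Z$ have opposite parities and $b \neq 0$, then one can write $a = 2qb + r$ with $|r| \leq |b| - 1$. This follows from standard centered division by $2b$ (which a priori yields only $|r| \leq |b|$), combined with the observation that $r \equiv a \pmod 2$ has parity opposite to $b$, ruling out the case $|r| = |b|$. I use this to Euclidean-reduce the first column of $A$. Write column $1$ as $(a_1, a_2, \ldots, a_n)^T$ with $a_1$ odd and the other $a_i$ even. For each $i \geq 2$, apply a suitable power of $S_{i1}$ on the left to replace $a_i$ by its even-division remainder modulo $2 a_1$, so that $|a_i| \leq |a_1| - 1$. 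If every $a_i = 0$ for $i \geq 2$, then expanding the determinant along column $1$ forces $a_1 = \pm 1$ and $O_1$ corrects the sign; otherwise some nonzero $a_i$ satisfies $|a_i| < |a_1|$, and a power of $S_{1i}$ on the left reduces $|a_1|$ via the same even-division trick. Since $|a_1|$ strictly decreases on each round-trip, the process terminates with column $1$ equal to $(1, 0, \ldots, 0)^T$.

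Once column $1$ is in this form, the first row is $(1, b_2, \ldots, b_n)$ with each $b_j$ even, and right multiplication by appropriate powers of $S_{1j}$ ($j \geq 2$) clears the $b_j$ without disturbing column $1$. The resulting matrix has block form $\begin{pmatrix} 1 & 0 \\ 0 & A' \end{pmatrix}$ with $A' \in \Gamma_{n-1}[2]$, and by induction $A'$ lies in the subgroup generated by $\{S_{ij}, O_i : 2 \leq i \neq j \leq n\}$, finishing the proof. The one subtle point is guaranteeing that the Euclidean descent on column $1$ is strictly decreasing, and this is exactly what the parity refinement $|r| \leq |b| - 1$ (rather than the usual $|r| \leq |b|$) buys us.
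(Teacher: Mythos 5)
Your proposal is correct and follows essentially the same route as the paper: an ``even division algorithm'' (quotient forced even at the cost of the remainder's sign) drives a Euclidean descent on the first column, reducing it to $\pm v_1$, after which one corrects the sign with $O_1$, clears the first row, and inducts on $n$. The only cosmetic differences are that you organise the descent by reducing all off-diagonal entries of the column before attacking $a_1$ and justify $a_1 = \pm 1$ via the determinant rather than via unimodularity of the column, but the argument is the same.
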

\begin{proof}Observe that we may think of the matrices $S_{ij}$ as corresponding to carrying out `even' row operations, that is, adding an even multiple of one matrix row to another. Let $u$ be the first column of some matrix in $\Gamma_n[2]$, and denote by $u^{(i)}$ the $i$th row of $u$. Let $v_1$ be the standard column vector with a 1 in the first entry and 0s elsewhere.

\textbf{Claim:} The column $u$ can be reduced to $\pm v_1$ using even row operations.

We use induction on $|u^{(1)}|$. For $|u^{(1)}| = 1$, the claim is obvious. Now suppose $|u^{(1)}| > 1$. As in the proof of Proposition \ref{oddlengths}, we deduce that there must be some $u^{(j)}$ which is not a multiple of $u^{(1)}$. By the Division Algorithm, there exist $q, r \in \Z$ such that $u^{(j)} = q|u^{(1)}| + r$, with $0 \leq r < |u^{(1)}|$. If $q$ is not even, we instead write $u^{(j)} = (q+1)|u^{(1)}| + (r-|u^{(1)}|)$. Note that if $q$ is odd, then $r \neq 0$, since $u^{(1)}$ is odd and $u^{(j)}$ is even, and so $-|u^{(1)}| < r- |u^{(1)}|$. Depending on the parity of $q$, we do the appropriate number of even row operations to replace $u^{(j)}$ with $r$ or $r - |u^{(1)}|$. In both cases, we have replaced $u^{(j)}$ with an integer of absolute value smaller than $|u^{(1)}|$. It is clear that now we may reduce the absolute value of $u^{(1)}$ by either adding or subtracting twice the (new) $j$th row from the first row, and so by induction we have proved the claim.

We now induct on $n$ to prove the lemma. It is clear that $\Gamma_1[2] = \langle O_1 \rangle$. Using the above claim, we may assume that we have reduced $M \in \Gamma_n[2]$ so it is of the form
\[\left [ \begin{array}{c|c}
 \pm 1 & \ast \\
\hline
0 & N  \\
\end{array} \right ], \] where $N \in \Gamma_{n-1}[2]$. Our aim is to further reduce $M$ to the identity matrix using the set of matrices in the statement of the lemma. By induction, we may assume that $N$ can be reduced to the identity matrix using the appropriate members of $\{S_{ij}, O_i \mid i, j > 1 \}$. Then we simply use even row operations to fix the top row, and finish by applying $O_1$ if necessary.
\end{proof}

By Lemma \ref{conggens}, the restriction of the canonical map $\aut(F_n) \to \gln$ gives the short exact sequence \[ 1 \longrightarrow \ptor_n \longrightarrow \ppia_n \longrightarrow \Gamma_n[2] \longrightarrow 1, \] since $P_{ij}$ maps to $S_{ji}$ and $\iota_i$ maps to $O_i$.

The rest of the paper is concerned with finding a generating set for the palindromic Torelli group $\ptor_n$. In order to describe our generating set, we introduce some terminology. 

Let $Y$ be the image of the free basis $X$ under some automorphism $\alpha \in \pia_n$. The set $Y$ is also a free basis for $F_n$, whose members are palindromes on $X$; thus, we refer to $Y$ as a \emph{$\pi$-basis}. An automorphism $\phi \in \ptor_n$ is a \emph{doubled commutator transvection} if for some $y_1$, $y_2$, $y_3$ in some $\pi$-basis $Y$, $\phi$ maps $y_1$ to $[y_2,y_3]^{\mathrm{rev}} y_1 [y_2, y_3]$, and fixes the other members of $Y$. Observe that $\phi \in \ptor_n$ is a doubled commutator transvection if and only if $\phi$ is conjugate in $\pia_n$ to the commutator $\chi_1 := [P_{12}, P_{13}]$. An automorphism $\phi \in \ptor_n$ is a \emph{separating $\pi$-twist} if for some $y_1$, $y_2$, $y_3$ in some $\pi$-basis $Y$, $\phi$ is given by
\[ \phi(y_i) = \begin{cases} d^{\mathrm{rev}} y_1 d, & \mbox{ if } i=1, \\
d^{-1} y_2 {(d^{\mathrm{rev}})}^{-1}, & \mbox{ if } i=2, \\
d^{\mathrm{rev}} y_3 d, &  \mbox{ if } i = 3, \\
y_i, & \mbox{ otherwise}, \end{cases} \]
where $d = {y_1}^{-1}{y_2}^{-1}{y_3}^{-1}y_1y_2y_3 \in F_n$. It is a straightforward, if lengthy, calculation to verify that $\phi \in \ptor_n$ is a separating $\pi$-twist if and only if $\phi$ is conjugate in $\pia_n$ to the automorphism \[\chi_2 := (P_{23}{P_{13}}^{-1}P_{31}P_{32}P_{12}{P_{21}}^{-1})^2 \in \ptor_n.\]

The definition of a separating $\pi$-twist may seem unwieldy, however it belies a hidden geometry. The automorphism $\chi_2$ is the image in $\ptor_n$ under the Nielsen embedding of the Dehn twist about the curve $C$, seen in Figure~\ref{2bdry}. We call such automorphisms separating $\pi$-twists to reflect this geometric interpretation.

Theorem~\ref{mainthm} states that doubled commutator transvections and separating $\pi$-twists suffice to generate $\ptor_n$. To prove this, we construct a new complex on which $\ptor_n$ acts in a suitable way. We then apply a theorem of Armstrong \cite{Armstrongorbit} to conclude that $\ptor_n$ is generated by the action's vertex stabilisers. In the following section, we define the complex and use it to prove Theorem \ref{mainthm}.

\section{The complex of partial $\pi$-bases} Day--Putman \cite{DayPutman} use the \emph{complex of partial bases} of $F_n$, denoted $\mathcal{B}_n$, to derive a generating set for $\IA_n$. We build a complex modelled after $\mathcal{B}_n$, and follow the approach of Day--Putman to find a generating set for $\ptor_n$.

Fix $X := \{x_1, \ldots, x_n\}$ as a free basis of $F_n$. A \emph{$\pi$-basis}, as discussed above, is a set of palindromes on $X$ which also forms a free basis of $F_n$. A \emph{partial $\pi$-basis} is a set of palindromes on $X$ which may be extended to a $\pi$-basis. The \emph{complex of partial $\pi$-bases} of $F_n$, denoted $\pbc_n$, is defined to be the simplicial complex whose $(k-1)$-simplices correspond to partial $\pi$-bases $\{w_1, \ldots, w_k \}$. We postpone until Section 4 the proof of the following theorem on the connectedness of $\pbc_n$.

\begin{theorem}\label{pbcconn}For $n \geq 3$, the complex $\pbc_n$ is simply-connected.
\end{theorem}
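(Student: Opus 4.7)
The plan is to adapt the strategy of Day--Putman \cite{DayPutman}, who prove analogous connectivity results for the complex of partial bases $\mathcal{B}_n$, by inducting on $n$ with the base case $n = 3$.

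\textbf{Step 1: Path-connectedness.} By Proposition~\ref{palgens}, the group $\pia_n$ acts transitively on $\pi$-bases of $F_n$, so every vertex of $\pbc_n$ lies in the $\pia_n$-orbit of some $x_i \in X$. Since $X$ itself is an $(n-1)$-simplex of $\pbc_n$, it then suffices to verify that for each generator $g$ of $\pia_n$ given by Proposition~\ref{palgens} (an elementary palindromic automorphism $P_{ij}$, inversion $\iota_j$, or permutation) and each $x_k \in X$, the vertex $g(x_k)$ is connected to $x_k$ by a short edge-path. For example, $P_{ij}(x_i) = x_jx_ix_j$, and the set $\{x_jx_ix_j\} \cup (X \setminus \{x_i\})$ is a $\pi$-basis, giving an edge from $x_k$ (for $k \neq i$) to $x_jx_ix_j$, and hence a two-step path from $x_i$.

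\textbf{Step 2: Base case $n=3$.} I would exploit the short exact sequence
\[ 1 \longrightarrow \ptor_n \longrightarrow \ppia_n \longrightarrow \Gamma_n[2] \longrightarrow 1 \]
together with Proposition~\ref{oddlengths}, which shows a palindrome is determined up to the leading ``middle letter'' by its image in $F_n^{\mathrm{ab}} \otimes \Z/2$. Consequently, vertices of $\pbc_3$ map essentially injectively to primitive columns of $\Gamma_3[2]$ (up to sign/inversion), and triangles of $\pbc_3$ correspond to bases of $\Z^3$ represented in $\Gamma_3[2]$. A loop $\gamma$ in $\pbc_3$ therefore yields a word in the generators $\{S_{ij}, O_i\}$ that represents the trivial matrix. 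Using the presentation provided by Proposition~\ref{congpres3}, one then writes $\gamma$ as a product of conjugates of the defining relators; each such relator can be filled in by a disk in $\pbc_3$ built from 2-simplices. A short case analysis (relation-by-relation) completes the base case.

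\textbf{Step 3: Inductive step, $n \geq 4$.} Given a loop $\gamma$ in $\pbc_n$, I would use a standard ``link argument'' to homotope it into the star of a fixed vertex, say $x_n$. Concretely, one subdivides $\gamma$ so that consecutive vertices span edges, and then for each vertex $v$ of $\gamma$ that fails to span an edge with $x_n$, one finds a $\pi$-basis containing both $v$ and some common partner $w$ such that $\{w, x_n\}$ is a partial $\pi$-basis; this provides a combinatorial homotopy pushing $v$ into $\Star(x_n)$. Once $\gamma$ lies in $\Star(x_n)$, it projects to a loop in $\Link(x_n)$, which, by quotienting out the action of $x_n$, can be identified with a subcomplex of a $\pbc_{n-1}$-like complex of partial $\pi$-bases of the rank $n-1$ free group on $X \setminus \{x_n\}$. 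Simple-connectedness of $\Link(x_n)$ by the inductive hypothesis then fills $\gamma$.

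\textbf{Expected main obstacle.} The delicate step is the link argument in Step 3: the palindromic constraint is not preserved under arbitrary linear moves, so $\Link(x_n)$ does not literally equal $\pbc_{n-1}$. I would need a comparison lemma showing that palindromes on $X$ which extend $\{x_n\}$ to a $\pi$-basis are, after a controlled modification (peeling off any $x_n$-contributions in their ``palindromic middles''), in bijection with palindromes on $X \setminus \{x_n\}$ that form partial $\pi$-bases of $F_{n-1}$, in a way compatible with forming simplices. Verifying this identification, and checking that the resulting map is at least $\pi_1$-surjective onto $\pbc_{n-1}$, is where the palindromic setting will demand more work than in Day--Putman's free-basis case.
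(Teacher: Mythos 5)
There is a genuine gap, and it is decisive in your base case. Your Step 2 passes from a loop in $\pbc_3$ to a word in $\{S_{ij},O_i\}$ and proposes to fill it using the presentation of $\Gamma_3[2]$ from Proposition~\ref{congpres3}. But this argument takes place, in effect, in the quotient $\pbc_3/\ptor_3\cong\pbc_3(\Z)$, not in $\pbc_3$: a relator of $\Gamma_3[2]$ lifts to an element of $\ptor_3$, which acts nontrivially on $\pbc_3$, so the ``disk'' you build from such a relator need not close up upstairs. Indeed the paper proves (Corollary~\ref{not1conn}) that $\pbc_3(\Z)$ is \emph{not} simply-connected, precisely because relator 10 lifts to a separating $\pi$-twist; so no filling argument that only sees $\Gamma_3[2]$ can establish simple-connectivity of $\pbc_3$. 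The supporting claim that vertices of $\pbc_3$ map ``essentially injectively'' to columns is also false: Proposition~\ref{oddlengths} pins down only the middle letter, and distinct palindromes such as $x_3x_2x_1x_2x_3$ and $x_2x_3x_1x_3x_2$ have the same abelianisation. The fix is to run the filling argument against a presentation of $\pia_3$ (or $\ppia_3$) itself rather than of its quotient $\Gamma_3[2]$ --- which is exactly what the paper does, for all $n\geq 3$ at once and with no induction: it builds a map $\Psi$ from the Cayley graph of $\pia_n$ (with respect to a streamlined version of Collins' presentation) to $\pbc_n$, shows $\Psi_\ast$ is trivial on $\pi_1$ by extending $\Psi$ over the simply-connected Cayley complex (checking each relator lands in a contractible subcomplex), and shows $\Psi_\ast$ is surjective using transitivity of $\pia_n$ on two-element partial $\pi$-bases.

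Your Step 3 also rests on unproved infrastructure. The homotopy pushing a loop into $\Star(x_n)$ requires connectivity of links of vertices and edges of $\pbc_n$ (to perform the surgeries), which is of comparable difficulty to the theorem itself and is not established; and the identification of $\Link(x_n)$ with (something $\pi_1$-surjecting onto) $\pbc_{n-1}$ is exactly the ``comparison lemma'' you defer. A palindrome on $X$ forming a partial $\pi$-basis with $x_n$ has no canonical $x_n$-free palindromic representative, so the proposed ``peeling off'' is not a well-defined simplicial map. Since the paper's Cayley-graph argument avoids both the induction and the link analysis entirely, I would recommend abandoning this route rather than trying to repair it.
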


Our complex $\pbc_n$ is not a subcomplex of $\mathcal{B}_n$, as the vertices of $\mathcal{B}_n$ are taken to be conjugacy classes, rather than genuine members of $F_n$. We remove this technicality, as it can be shown that two odd length palindromes are conjugate if and only if they are equal. Given this, it is clear, however, that $\pbc_n$ is isomorphic to a subcomplex of $\mathcal{B}_n$.

There is an obvious simplicial action of $\pia_n$ on $\pbc_n$. This action is, by definition, transitive on the set of $k$-simplices, for each $0 \leq k < n$. Further, $\ptor_n$ acts without rotations, that is, if $\phi \in \ptor_n$ stabilises a simplex $s$ of $\pbc_n$, then it fixes $s$ pointwise. Following work of Charney~\cite{Charney} on related complexes, we obtain that the quotient of $\pbc_n$ by $\ptor_n$ is highly-connected.
\begin{theorem}\label{quoconn}For $n \geq 3$, the quotient $\pbc_n / \ptor_n$ is $(n-3)$-connected.\end{theorem}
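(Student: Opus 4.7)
The plan is to identify $\pbc_n / \ptor_n$ with a concrete combinatorial complex of congruence-compatible partial bases of $\Z^n$, and then invoke Charney's connectivity results \cite{Charney} for such complexes.

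First, define $\mathcal{K}_n$ to be the simplicial complex whose $(k-1)$-simplices are subsets $\{v_1, \ldots, v_k\} \subset \Z^n$ such that each $v_i$ reduces modulo $2$ to a standard basis vector of $(\Z/2)^n$ and $\{v_1, \ldots, v_k\}$ extends to a basis of $\Z^n$. By the argument used in the proof of Proposition \ref{oddlengths}, abelianisation $F_n \to \Z^n$ sends an odd-length palindrome to a primitive vector reducing to a standard basis vector mod $2$, so it induces a $\ptor_n$-invariant simplicial map $\Phi \colon \pbc_n \to \mathcal{K}_n$ that descends to a map $\bar\Phi \colon \pbc_n / \ptor_n \to \mathcal{K}_n$.

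Next, I would show that $\bar\Phi$ is a simplicial isomorphism. Surjectivity of $\bar\Phi$ on vertices amounts to lifting any odd primitive $v \in \Z^n$ to a palindrome lying in some $\pi$-basis, which follows by using the surjection $\ppia_n \twoheadrightarrow \Gamma_n[2]$ from Lemma \ref{conggens} to find an element of $\ppia_n$ sending $x_i$ to the required palindrome; surjectivity on higher simplices then follows by combining this with the transitivity of $\pia_n$ on $k$-simplices of $\pbc_n$. For injectivity, given partial $\pi$-bases $B, B'$ with identical abelianisations, the transitivity of $\pia_n$ yields $\psi \in \pia_n$ with $\psi(B) = B'$, and I would modify $\psi$ by an element of $\Stab_{\pia_n}(B)$ so that its image in $\gln$ is trivial. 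After conjugating so that $B = \{x_1, \ldots, x_k\}$, this reduces to the assertion that the image of $\pia_n(k)$ in $\gln$ coincides with the full pointwise stabiliser of $\{e_1, \ldots, e_k\}$ in the image of $\pia_n \to \gln$; this is read off from the explicit generators in Proposition \ref{palgens} together with Lemma \ref{conggens}, which between them realise all signed permutations of $\{e_{k+1}, \ldots, e_n\}$, all even column operations into columns $k+1, \ldots, n$, and all of $\Gamma_{n-k}[2]$ acting on the bottom-right block.

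Once the identification $\pbc_n / \ptor_n \cong \mathcal{K}_n$ is established, the theorem reduces to showing that $\mathcal{K}_n$ is $(n-3)$-connected, which is a direct consequence of Charney's connectivity theorem for complexes of partial bases of $\Z^n$ associated with principal congruence subgroups of $\gln$. The main obstacle in the plan is the injectivity step: while the strategy of modifying by a stabiliser element is clean in outline, its verification hinges sharply on the precise form of the generating set for $\pia_n(k)$ supplied by Proposition \ref{palgens}, ensuring that the required modifying element may be chosen palindromically and not merely in $\aut(F_n)$.
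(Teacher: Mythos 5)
Your overall strategy is the same as the paper's: identify $\pbc_n/\ptor_n$ with a complex of partial bases of $\Z^n$ whose vertices are congruent to standard basis vectors mod $2$, and then appeal to Charney. Your complex $\mathcal{K}_n$ coincides with the paper's $\pbc_n(\Z)$ (the paper notes that a partial basis satisfying the mod-$2$ condition automatically extends to a full $\pi$-basis, by a column-operations argument), and your injectivity argument --- correcting $\psi$ by an element of the pointwise stabiliser using the fact that $\ppia_n(k)$ surjects onto $\Gamma_n[2](k)$ --- is a repackaging of the paper's Lemma~\ref{bascomp}, which rests on exactly the same lifting statement from Lemma~\ref{commdiag}. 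So the identification step is essentially sound, though your surjectivity claim quietly uses the same ``every odd primitive vector is a column of a matrix in $\Gamma_n[2]$'' fact that you should make explicit.

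The genuine gap is in the final sentence: the $(n-3)$-connectivity of $\mathcal{K}_n$ is \emph{not} a direct consequence of Charney's theorem, because Charney's Theorem~2.5 concerns the poset $\mathcal{X}_n$ of \emph{ordered} unimodular sequences satisfying the congruence condition, whereas $\mathcal{K}_n$ is the unordered simplicial complex. These are not obviously homotopy equivalent, and some argument is needed to transfer connectivity from one to the other. The paper handles this by defining the order-forgetting map $h\colon \mathcal{X}_n \to \pbc_n(\Z)^{\ast}$ into the barycentric subdivision, checking it is well defined, and then showing that $h_{\ast}\colon \pi_i(\mathcal{X}_n) \to \pi_i(\pbc_n(\Z)^{\ast})$ is surjective for each $i$ by lifting simplicial $i$-spheres via a fixed lexicographic ordering of vertices; surjectivity plus $\pi_i(\mathcal{X}_n)=1$ for $i \leq n-3$ then gives the result. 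Your proposal needs this bridge (or some substitute for it) to be complete.
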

The proof of this theorem is discussed in Section 4.

Theorems \ref{pbcconn} and \ref{quoconn} allow us to apply the following theorem of Armstrong \cite{Armstrongorbit} to the action of $\ptor_n$ on $\pbc_n$, for $n \geq 4$. The statement of the theorem is as given in Day--Putman \cite{DayPutman}.
\begin{theorem}\label{armstrong} Let $G$ act simplicially on a simply-connected simplicial complex $X$, without rotations. Then $G$ is generated by the vertex stabilisers of the action if and only if $X / G$ is simply-connected. \end{theorem}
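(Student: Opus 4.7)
The plan is to establish the stronger statement that $\pi_1(X/G)$ is canonically isomorphic to $G/H$, where $H \leq G$ is the subgroup generated by all vertex stabilisers of the action. Given this, the theorem is immediate: $X/G$ is simply-connected if and only if $G/H$ is trivial if and only if $H = G$ if and only if $G$ is generated by its vertex stabilisers.

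First I would observe that $H$ is normal in $G$, since $g \Stab_G(v) g^{-1} = \Stab_G(gv)$ shows conjugation by $g$ permutes the natural generating set of $H$. Since $G$ acts without rotations, $X/G$ inherits a natural simplicial structure whose simplices are $G$-orbits of simplices of $X$, and every edge-path in $X/G$ lifts uniquely to an edge-path in $X$ once a starting vertex is chosen. Fix a vertex $v_0 \in X$ and let $\bar v_0$ be its image in $X/G$. Define $\Phi \colon \pi_1(X/G, \bar v_0) \to G/H$ by lifting a loop $\gamma$ to an edge-path from $v_0$ terminating at some $g \cdot v_0$, and setting $\Phi([\gamma]) = gH$. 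Well-definedness under homotopy follows because two edge-path representatives of the same loop differ by a sequence of insertions and deletions of the boundary of a 2-simplex in $X/G$, which lift to corresponding modifications in $X$ without changing the terminal vertex; altering the choice of lift of $\gamma$ itself changes $g$ only by an element of $\Stab_G(v_0) \subseteq H$. Multiplicativity is straightforward, and surjectivity is easy, since for any $g \in G$ we may connect $v_0$ to $g v_0$ by an edge-path in the connected complex $X$ and project.

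The main obstacle is injectivity. Suppose $\Phi([\gamma]) = H$, so the lift $\tilde \gamma$ ends at $h v_0$ with $h = h_1 h_2 \cdots h_k$ a product of vertex stabilisers, $h_j \in \Stab_G(w_j)$. I would construct a second edge-path $\delta$ from $v_0$ to $h v_0$ in $X$ by choosing edge-paths $p_j$ from $v_0$ to $w_j$ and setting
\[\delta = p_1 \cdot (h_1 \cdot p_1^{-1}) \cdot (h_1 \cdot p_2) \cdot (h_1 h_2 \cdot p_2^{-1}) \cdots (h_1 \cdots h_{k-1} \cdot p_k) \cdot (h_1 \cdots h_k \cdot p_k^{-1}).\]
Each consecutive pair $(h_1 \cdots h_{j-1} \cdot p_j) \cdot (h_1 \cdots h_j \cdot p_j^{-1})$ concatenates properly because $h_j$ fixes $w_j$, and projects in $X/G$ to the null path $\bar p_j \bar p_j^{-1}$. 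Hence the projection of $\delta$ to $X/G$ is the obviously null-homotopic loop $\bar p_1 \bar p_1^{-1} \cdots \bar p_k \bar p_k^{-1}$. Since $X$ is simply-connected, $\tilde \gamma$ and $\delta$ are homotopic rel endpoints in $X$, and projecting this homotopy to $X/G$ exhibits a null-homotopy of $\gamma$, completing the proof.
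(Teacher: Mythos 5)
The paper offers no proof of this statement: it is quoted verbatim from Day--Putman and attributed to Armstrong, so the only comparison available is with the standard argument in the literature. Your proposal in fact reconstructs Armstrong's stronger theorem, namely that $\pi_1(X/G,\bar v_0)\cong G/H$ for $H$ the subgroup generated by vertex stabilisers, from which the stated biconditional is immediate; your injectivity step (building $\delta$, whose projection is visibly null-homotopic, and pushing a filling of $\tilde\gamma$ against $\delta$ from the simply-connected $X$ down to $X/G$) is correct and is exactly the right device. Two points need repair, though neither is fatal. First, edge-paths in $X/G$ do \emph{not} lift uniquely: the quotient map is not a covering, and at an intermediate vertex $v_i$ the possible lifts of the next edge form a $\Stab_G(v_i)$-orbit. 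Second, changing the lift at $v_i$ replaces the terminal vertex $gv_0$ by $sgv_0$ for some $s\in\Stab_G(v_i)$ --- not by an element of $\Stab_G(v_0)$ as you claim --- so the coset $gH$ survives only because $H$ is normal, i.e.\ $sgH=g(g^{-1}sg)H=gH$; fortunately you established normality at the outset. You should also justify that an elementary homotopy across a $2$-simplex $\bar\sigma$ of $X/G$ lifts compatibly with a given lifted edge $a-b$: one must translate a preimage $2$-simplex of $\bar\sigma$ by a group element so that it contains that particular edge, using the absence of rotations to match vertices. With these adjustments the argument is complete.
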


We analyse the vertex stabilisers of $\ptor_n$ using an inductive argument. It is known that $\ptor_1 = 1$ and $\ptor_2 = 1$; the latter equality follows from the fact that $\IA_2 = \inn(F_2)$ and $\inn(F_n) \cap \pia_n = 1$ for $n \geq 1$. We treat the $n=3$ case differently, as the quotient $\pbc_3 / \ptor_3$ is not simply-connected, and so does not allow us to apply Armstrong's theorem directly. This treatment is postponed until Section 5.

\textbf{A Birman exact sequence.} We require a version of the free group analogue of the Birman exact sequence, as developed by Day--Putman \cite{DayPutmanBirseq}. Recall that $\ppia_n(k)$ consists of the pure palindromic automorphisms fixing $x_1, \ldots, x_k$. \begin{prop}\label{birman}For $0 \leq k \leq n$, there exists the split short exact sequence \[ 1 \longrightarrow \mathcal{J}_n(k) \longrightarrow \ppia_n(k) \longrightarrow \ppia_{n-k} \longrightarrow 1, \] where $\mathcal{J}_n(k)$ is the normal closure in $\ppia_n(k)$ of the set $\{P_{ij} \mid i > k,$ $j \leq k \}$. \end{prop}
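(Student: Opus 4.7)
The goal is to construct a surjection $\pi \colon \ppia_n(k) \twoheadrightarrow \ppia_{n-k}$, exhibit an explicit splitting, and then identify the kernel as $\mathcal{J}_n(k)$ using the generating set provided by Corollary~\ref{purestabs} together with a standard collection argument.

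\textbf{Construction of the maps.} Regard $\ppia_{n-k}$ as the palindromic automorphism group of the free group on $\{x_{k+1}, \ldots, x_n\}$. Let $r \colon F_n \to F_{n-k}$ be the retraction sending $x_i \mapsto 1$ for $i \leq k$ and $x_i \mapsto x_i$ for $i > k$, whose kernel is the normal closure $N = \langle \langle x_1, \ldots, x_k \rangle \rangle$. Any $\phi \in \ppia_n(k)$ fixes each $x_i$ with $i \leq k$, hence preserves $N$, and so descends to an endomorphism of $F_{n-k}$ whose value on $x_i$ ($i > k$) is $r(\phi(x_i))$. This image is a palindrome on $\{x_{k+1}, \ldots, x_n\}$ (palindromes restrict to palindromes under $r$), and the induced map is an automorphism because the same construction applied to $\phi^{-1}$ provides a two-sided inverse. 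This defines $\pi$. The splitting $s \colon \ppia_{n-k} \hookrightarrow \ppia_n(k)$ is the evident inclusion sending $P_{ij}$ and $\iota_i$ (with $i, j > k$) to the identically-named generators of $\ppia_n(k)$; it respects Collins' relations, and $\pi \circ s = \mathrm{id}$ by inspection on generators.

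\textbf{Identifying the kernel.} The inclusion $\mathcal{J}_n(k) \subseteq \ker(\pi)$ is immediate: for $i > k$ and $j \leq k$, the generator $P_{ij}$ sends $x_i$ to $x_j x_i x_j$, which $r$ reduces to $x_i$, so $\pi(P_{ij}) = \mathrm{id}$; since $\ker(\pi)$ is normal in $\ppia_n(k)$, it contains the normal closure $\mathcal{J}_n(k)$. For the reverse inclusion, express any $\phi \in \ppia_n(k)$ as a word in the generating set $\{P_{ij}, \iota_i \mid i > k\}$ supplied by Corollary~\ref{purestabs}. Partition these letters into \emph{bad} letters (the $P_{ij}$ with $j \leq k$, which lie in $\mathcal{J}_n(k)$) and \emph{good} letters (the remainder, which lie in $s(\ppia_{n-k})$). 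Using $g_1 g_2 = (g_1 g_2 g_1^{-1}) g_1$ to swap a bad letter past a good letter while replacing it with a $\ppia_n(k)$-conjugate of itself (still in $\mathcal{J}_n(k)$ by normality), one collects all bad letters to the left, rewriting $\phi = j \cdot s(h)$ with $j \in \mathcal{J}_n(k)$ and $h \in \ppia_{n-k}$. Applying $\pi$ yields $\pi(\phi) = h$, so if $\phi \in \ker(\pi)$ then $h = 1$ and $\phi = j \in \mathcal{J}_n(k)$.

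\textbf{Main obstacle.} The argument is essentially formal; the only real content is the verification that $\pi$ is well-defined on $\ppia_n(k)$ (not merely on a choice of generators), which rests on the preservation of $N$ under any element fixing $x_1, \ldots, x_k$. Everything else follows from the general principle that in a group generated by $S_0 \sqcup S_1$, where $S_1$ generates a subgroup mapping isomorphically onto a quotient, the kernel is the normal closure of $S_0$. Corollary~\ref{purestabs} provides precisely such a generating set for $\ppia_n(k)$, with $S_0 = \{P_{ij} \mid i > k,\, j \leq k\}$ and $S_1 = \{P_{ij}, \iota_i \mid i, j > k\} \cup \{\iota_i \mid i > k\}$, so no further work is required.
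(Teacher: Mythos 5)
Your argument is correct and follows essentially the same route as the paper: the same homomorphism $\pi = \theta_\ast$ induced by the retraction killing $x_1,\dots,x_k$, the same section defined on Collins' generators, and the same reliance on the generating set of Corollary~\ref{purestabs}. The one point of divergence is the identification of the kernel. The paper invokes the standard presentation-theoretic fact (Magnus--Karrass--Solitar) that $\ker\theta_\ast$ is normally generated by lifts of the defining relators of the quotient on the image generating set, and observes that only the relators $\theta_\ast(P_{ij})=1$ with $j\leq k$ lift non-trivially. You instead run a collection argument: since $\ppia_n(k)/\mathcal{J}_n(k)$ is generated by the images of the ``good'' letters, every $\phi$ factors as $j\cdot s(h)$ with $j\in\mathcal{J}_n(k)$ and $h\in\ppia_{n-k}$, and applying $\pi$ forces $h=1$ when $\phi\in\ker\pi$. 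Your version has the mild advantage of using the splitting directly and not needing a presentation of $\ppia_{n-k}$ at this step (you still implicitly use Collins' presentation to see that the section is well defined, though this could be sidestepped by noting that the section is simply ``extend by the identity on $x_1,\dots,x_k$,'' which is a homomorphism for free). Both arguments are standard and complete.
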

\begin{proof}A map $\theta_\ast: \ppia_n(k) \to \ppia_{n-k}$ is induced by the map $\theta: F_n \to F_{n-k}$ that trivialises each $x_1, \dots, x_k$. Let $\{ y_{k+1} , \dots , y_n \}$ be a free basis for $F_{n-k}$, where $\theta(x_i) = y_i$ for $k+1 \leq i \leq n$. Denote by $Q_{ij}$ and $\eta_i$ the elementary palindromic automorphism sending $y_i$ to $y_j y_i y_j$, and the inversion sending $y_i$ to ${y_i}^{-1}$, respectively ($k+1 \leq i \neq j \leq n$).

By Corollary~\ref{purestabs}, we know that $\ppia_n(k)$ is generated by the set \[\mathcal{S} := \{P_{ij}, \iota_i \mid i > k, 1 \leq j \leq n \}.\] If $j \leq k$, then $\theta_\ast(P_{ij})$ is trivial. If $i,j \geq k+1$, then $\theta_\ast(P_{ij}) = Q_{ij}$ and $\theta_\ast(\iota_i) = \eta_i$, so we have that $\theta_\ast$ is surjective, by examining Collins' generators for $\ppia_{n-k}$. Indeed, the map $\theta_\ast$ has a section, taking $Q_{ij}$ to $P_{ij}$ and $\eta_i$ to $\iota_i$, which we know is well-defined by Collins' finite presentation for $\ppia_{n-k}$. Thus, we obtain a split short exact sequence via the epimorphism $\theta_\ast$.

All that is left to establish is the kernel of $\theta_\ast$. Notice that we have a presentation for $\ppia_{n-k}$ in terms of the generating set $\theta_\ast (\mathcal{S})$: explicitly, we add the relations $\theta_\ast(P_{ij}) = 1$ for $j \leq k$ to Collins' relations on the set $\{Q_{ij}, \eta_i \}$. It is a standard fact (see, for example, the proof of Theorem 2.1 in Magnus--Karrass--Solitar \cite{MKS}) that the kernel of $\theta_\ast$ is the normal closure in $\ppia_n(k)$ of the obvious lifts of the defining relators on $\theta_\ast (\mathcal{S})$. The only defining relators with non-trivial lifts in $\ppia_n(k)$ are the relators $\theta_\ast(P_{ij})$ with $j \leq k$, thus the kernel is $\mathcal{J}_n(k)$ as in the statement of the proposition.
\end{proof}

Our `Birman kernel' $\mathcal{J}_n(k)$ is rather worse behaved than the analogous Birman kernel of Day--Putman. Their kernel, denoted $\mathcal{K}_{n,k,l}$, is finitely generated, whereas it may be shown by adapting the proof of their Theorem E that $\mathcal{J}_n(k)$ is not. This difference is due in part to the fact that their version of $\ppia_n(k)$ need only fix each of $x_1,\ldots,x_k$ up to conjugacy. The lack of finite generation of $\mathcal{J}_n(k)$ is, however, not an obstacle to the goal of the current paper; we only require that $\mathcal{J}_n(k)$ is \emph{normally} generated by a finite set.

Our Birman exact sequence projects into $\gln$ in an obvious way, made precise in the following lemma. Let $v_i$ denote the image of $x_i \in F_n$ under the abelianisation map. We denote by $\Gamma_n[2](k)$ the members of $\Gamma_n[2]$ which fix $v_1, \ldots, v_k \in \Z^n$, and by $\mathcal{H}_n(k)$ the group $\mathrm{Hom}(\Z^{n-k}, (2\Z)^k)$.
\begin{lemma}\label{commdiag}Fix $0 \leq k \leq n$. Then there exists the following commutative diagram of split short exact sequences\[ \xymatrix{
1 \ar[r] & \mathcal{J}_n(k) \ar[r] \ar@{->>}[d] & \ppia_n(k) \ar[r] \ar@{->>}[d] & \ar@/^1pc/[l]^s \ppia_{n-k}  \ar[r] \ar@{->>}[d] & 1\\
1 \ar[r] & \mathcal{H}_n(k)   \ar[r]      & \Gamma_n[2](k)  \ar[r]         & \ar@/^1pc/[l]^t \Gamma_{n-k}[2] \ar[r]      & 1}, \] where $s$ and $t$ are the obvious splitting homomorphisms.
\end{lemma}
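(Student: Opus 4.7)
The plan is first to construct the bottom row by analysing the block structure of matrices in $\Gamma_n[2](k)$, then to obtain the vertical maps as restrictions of the canonical surjection $\aut(F_n) \to \gln$, and finally to check commutativity and splitting-compatibility on generators.

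First I would establish the block decomposition. Any $M \in \Gamma_n[2](k)$ fixes $v_1, \ldots, v_k \in \Z^n$, so its first $k$ columns are $v_1, \ldots, v_k$, and $M \equiv I \pmod 2$ forces
\[ M = \begin{pmatrix} I_k & B \\ 0 & C \end{pmatrix} \]
with $B$ a $k \times (n-k)$ matrix having entries in $2\Z$ and $C \in \Gamma_{n-k}[2]$. The map $\pi : M \mapsto C$ is a homomorphism $\Gamma_n[2](k) \to \Gamma_{n-k}[2]$, whose kernel (matrices with $C = I_{n-k}$) identifies canonically via $M \mapsto B$ with $\mathrm{Hom}(\Z^{n-k}, (2\Z)^k) = \mathcal{H}_n(k)$. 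The block-diagonal embedding $t : C \mapsto \bigl(\begin{smallmatrix} I_k & 0 \\ 0 & C \end{smallmatrix}\bigr)$ splits $\pi$, giving the bottom row as a split short exact sequence.

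Next I would verify surjectivity of each vertical map. The right vertical map is surjective by Lemma~\ref{conggens}. For the middle map, Corollary~\ref{purestabs} gives that $\ppia_n(k)$ is generated by $\{P_{ij}, \iota_i : i > k\}$, whose images under $P_{ij} \mapsto S_{ji}$ and $\iota_i \mapsto O_i$ include: the $S_{ji}$ with $j \le k < i$, which fill out the $B$ block and generate $\mathcal{H}_n(k)$; and the $S_{ji}$, $O_i$ with $i, j > k$, whose images under $\pi$ are a generating set for $\Gamma_{n-k}[2]$ by Lemma~\ref{conggens}. Combined with the block form, this proves the middle vertical map surjects onto $\Gamma_n[2](k)$. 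For the left map, the normal generators $P_{ij}$ ($i > k, j \leq k$) of $\mathcal{J}_n(k)$ map to $S_{ji} \in \mathcal{H}_n(k)$; as $\mathcal{H}_n(k)$ is normal in $\Gamma_n[2](k)$ the entire normal closure lands in $\mathcal{H}_n(k)$, and the image is all of $\mathcal{H}_n(k)$ since the $S_{ji}$ with $j \leq k < i$ additively span it.

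Finally, commutativity of both squares can be checked on generators. For the right square: $P_{ij}$ with $i,j > k$ goes via the top to $Q_{ij}$ and then to $S_{ji}' \in \Gamma_{n-k}[2]$, while via the bottom it goes to $S_{ji} \in \Gamma_n[2](k)$, whose image under $\pi$ is precisely $S_{ji}'$; the same check works for $\iota_i \mapsto \eta_i \mapsto O_i'$, and the normal generators $P_{ij}$ of $\mathcal{J}_n(k)$ map to $0$ along both paths. The left square commutes by our construction of the left vertical map. Compatibility of the splittings, $(\text{middle vertical}) \circ s = t \circ (\text{right vertical})$, is likewise immediate on generators: $s(Q_{ij}) = P_{ij}$ maps down to $S_{ji}$, which equals $t(S_{ji}')$, and similarly for $\eta_i$. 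The only real subtlety here is bookkeeping of the transpose convention $P_{ij} \mapsto S_{ji}$ and of the index ranges separating the $B$-block entries from the $C$-block entries; no substantive obstacle arises.
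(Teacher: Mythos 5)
Your proof is correct, and its overall skeleton matches the paper's: the top row is quoted from Proposition~\ref{birman}, the vertical arrows are restrictions of the canonical map $\aut(F_n) \to \gln$, and surjectivity and commutativity are checked on the generating sets from Corollary~\ref{purestabs} and Lemma~\ref{conggens}. Where you genuinely diverge is in how the bottom row is produced. The paper gets it by rerunning the presentation-theoretic argument of Proposition~\ref{birman} on the image side: it takes the generating set for $\Gamma_n[2](k)$ coming from the row-reduction proof of Lemma~\ref{conggens}, notes that the kernel of the projection is normally generated by the images of the $P_{ij}$ with $i>k$, $j\le k$, and then asserts that this kernel is $\mathrm{Hom}(\Z^{n-k},(2\Z)^k)$. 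You instead observe directly that any $M\in\Gamma_n[2](k)$ has the block form $\bigl(\begin{smallmatrix} I_k & B \\ 0 & C\end{smallmatrix}\bigr)$ with $B$ even and $C\in\Gamma_{n-k}[2]$, so that $\pi: M\mapsto C$, its kernel $M\mapsto B$, and the block-diagonal splitting $t$ all fall out of the multiplication rule for block upper-triangular matrices. This is more elementary and self-contained: it makes the identification of the kernel with $\mathcal{H}_n(k)$ a tautology rather than something "straightforward to verify", and it lets you deduce the generating set for $\Gamma_n[2](k)$ from the exact sequence rather than the other way around (with no circularity, since your bottom row is built independently of the top). The trade-off is only that the paper's route reuses machinery it has already set up, whereas yours requires the (easy) observation that fixing $v_1,\dots,v_k$ under left multiplication pins down the first $k$ columns.
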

\begin{proof}The top row is given by Proposition \ref{birman}. A generating set for $\Gamma_n[2](k)$ follows from the proof of Lemma \ref{conggens}; it is precisely the image in $\gln$ of $\{P_{ij}$, $\iota_i \mid i > k \}$, the generating set of $\ppia_n(k)$ given by Corollary \ref{purestabs}. The bottom row then follows by an argument similar to the proof of Proposition \ref{birman}, noting that the kernel is generated by the images of $P_{ij}$ $(i > k$, $j \leq k)$. It is straightforward to verify that this kernel is $\mathrm{Hom}(\Z^{n-k}, (2\Z)^k)$. Intuitively, $\alpha \in \mathrm{Hom}(\Z^{n-k}, (2\Z)^k)$ is encoding how many (even) multiples of $v_j$ $(1 \leq i \leq k)$ are added to each $v_i$ $(k < j \leq n)$.

The only vertical map left to consider is the right-most one. Its existence and surjectivity follow from Lemma \ref{conggens}. It is clear that all the arrows commute, and that the splitting homomorphisms $s$ and $t$ are compatible with the commutative diagram, so the proof is complete.
\end{proof}

\textbf{A generating set for $\mathcal{J}_n(1) \cap \ptor_n$.} By mapping $\ppia_n(k)$ into $\Gamma_n[2](k)$ then conjugating the normal subgroup $\mathcal{H}_n(k)$, we obtain a homomorphism $\alpha_k : \ppia_n(k) \to \aut(\mathcal{H}_n(k))$. Setting $k=1$, we obtain the following lemma.
\begin{lemma}\label{jcap} The group $\mathcal{J}_n(1) \cap \ptor_n$ is normally generated in $\mathcal{J}_n(1)$ by the set
\[\{[P_{ij},P_{i1}], \phantom{i} [P_{ij},P_{j1}]P_{i1}^2 \phantom{\phi} \mid \phantom{\phi} 1 < i \neq j \leq n \}.\] \end{lemma}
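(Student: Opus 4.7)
The plan is to prove $N = K$, where $T$ is the displayed set, $N := \langle\langle T \rangle\rangle_{\mathcal{J}_n(1)}$ denotes its normal closure in $\mathcal{J}_n(1)$, and $K := \mathcal{J}_n(1) \cap \ptor_n$. The inclusion $N \subseteq K$ is immediate from the commutative diagram of Lemma~\ref{commdiag}: the generators $[P_{ij}, P_{i1}]$ and $[P_{ij}, P_{j1}] P_{i1}^2$ of $T$ map in $\Gamma_n[2](1)$ to $[S_{ji}, S_{1i}]$ and $[S_{ji}, S_{1j}] S_{1i}^2$, which vanish by relations (8) and (9) of Corollary~\ref{congpres}.

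For $K \subseteq N$, the strategy is to show that the projection $\mathcal{J}_n(1) \to \mathcal{H}_n(1)$ descends to an isomorphism $\mathcal{J}_n(1) / N \cong \mathcal{H}_n(1)$. The key observation is that the elements $P_{i1}$ for $i > 1$ pairwise commute in $\ppia_n(1)$, since each $P_{i1}$ alters only the basis element $x_i$. Hence $G := \langle P_{i1} \mid i > 1 \rangle$ is a free abelian subgroup of $\mathcal{J}_n(1)$ of rank $n-1$ that maps isomorphically onto $\mathcal{H}_n(1) \cong \Z^{n-1}$. Once the key claim $\mathcal{J}_n(1) = GN$ is established, the quotient $\mathcal{J}_n(1)/N$ will be generated by $n-1$ pairwise commuting elements, and hence be an abelian quotient of $\Z^{n-1}$ that surjects onto $\mathcal{H}_n(1) \cong \Z^{n-1}$. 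This forces both maps to be isomorphisms and yields $N = K$.

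To establish $\mathcal{J}_n(1) = GN$, I would use that $\{P_{i1} \mid i > 1\}$ normally generates $\mathcal{J}_n(1)$ in $\ppia_n(1)$ by Proposition~\ref{birman}. Since $\{P_{i1}\} \subseteq GN$ and $GN$ is a subgroup of $\mathcal{J}_n(1)$ (as $N$ is normal in $\mathcal{J}_n(1)$), it suffices to show that $GN$ is normal in $\ppia_n(1)$. By Corollary~\ref{purestabs}, the normality check reduces to the generators $\{P_{mn}, \iota_m \mid m > 1, n \neq m\}$ of $\ppia_n(1)$. The $g$-conjugates of elements of $G$ are controlled by the identities
\[
P_{in}P_{i1}P_{in}^{-1} \equiv P_{i1} \pmod N, \qquad P_{mi}P_{i1}P_{mi}^{-1} \equiv P_{m1}^{-2}P_{i1} \pmod N, \qquad \iota_i P_{i1} \iota_i^{-1} = P_{i1}^{-1},
\]
together with $gP_{i1}g^{-1} = P_{i1}$ whenever the indices of $g$ are disjoint from $\{i, 1\}$. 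The first two identities use generators of $T$ directly, and the third is a short direct computation.

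The main obstacle is the companion check for $N$: that $g t g^{-1} \in GN$ for each generator $g$ of $\ppia_n(1)$ and each $t \in T$. Because $N$ is only a normal closure in $\mathcal{J}_n(1)$ and not a priori in $\ppia_n(1)$, this is not automatic when $g \notin \mathcal{J}_n(1)$. Each such conjugate lies in $K$ by normality of $\ptor_n$ in $\pia_n$, and an index-based case analysis using Collins' defining relations for $\pia_n$, after expanding $g[A,B]g^{-1} = [gAg^{-1}, gBg^{-1}]$ and applying the identities above, shows that $g t g^{-1}$ can be written as a product of an element of $G$ and an element of $N$. Once this is complete, $GN$ is normal in $\ppia_n(1)$, yielding $\mathcal{J}_n(1) = GN$ and hence $N = K$.
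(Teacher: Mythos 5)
Your argument is correct in outline but takes a genuinely different route from the paper. The paper's proof is presentation-theoretic: it writes down an explicit (infinite) presentation of $\mathcal{H}_n(1)$ on the images of all $\ppia_n(1)$-conjugates of the $P_{j1}$, invokes the standard fact that the kernel of $\mathcal{J}_n(1) \to \mathcal{H}_n(1)$ is normally generated by lifts of the relators, and then cuts the relator set down to the finite family indexed by the generators $C$ of $\ppia_n(1)$ by induction on word length. You instead exhibit the free abelian complement $G = \langle P_{i1} \rangle \cong \Z^{n-1}$, prove $\mathcal{J}_n(1) = GN$, and conclude by comparing $\mathcal{J}_n(1)/N$ with $\mathcal{H}_n(1) \cong \Z^{n-1}$. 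Your route is more elementary and makes the splitting of the Birman sequence do the work; both proofs ultimately rest on the same finite computation (controlling $cP_{j1}c^{-1}$ and $ctc^{-1}$ modulo $N$ for $c \in C^{\pm 1}$ and $t$ in the displayed set), which the paper likewise defers as ``a simple induction argument.''

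One step needs tightening. To get $GN \trianglelefteq \ppia_n(1)$ you must conjugate every generator of $GN$ by each $c \in C^{\pm1}$, and the generators of $N$ are arbitrary $\mathcal{J}_n(1)$-conjugates $hth^{-1}$ of elements of your set $T$. Writing $c(hth^{-1})c^{-1} = (chc^{-1})(ctc^{-1})(chc^{-1})^{-1}$ with $chc^{-1} \in \mathcal{J}_n(1)$, the conclusion ``$ctc^{-1} \in GN$'' is not enough: if $ctc^{-1} = uv$ with $u \in G$ nontrivial and $v \in N$, the $\mathcal{J}_n(1)$-conjugate of $u$ is not yet known to lie in $GN$. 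The repair is already implicit in what you wrote: $ctc^{-1} \in K$ because $\ptor_n$ and $\mathcal{J}_n(1)$ are normal in $\ppia_n(1)$, while $N \subseteq K$ and $G \cap K = 1$ since $G$ maps isomorphically onto $\mathcal{H}_n(1)$; hence $u = 1$ and in fact $ctc^{-1} \in N$. This shows $N$ is already normal in $\ppia_n(1)$, after which your conjugation identities for the $P_{i1}$ (and their inverse versions) close the normality argument for $GN$, and the rest of the proof goes through. A small separate point: to see that the two families of words lie in $\ptor_n$, verify the matrix identities $[S_{ji},S_{1i}] = 1$ and $[S_{ji},S_{1j}] = S_{1i}^{-2}$ directly in $\Gamma_n[2]$ rather than citing Corollary~\ref{congpres}, whose proof depends on Theorem~\ref{mainthm} and hence on this lemma.
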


\begin{proof}By Lemma \ref{commdiag}, there is a short exact sequence \[ 1 \longrightarrow \mathcal{J}_n(1) \cap \ptor_n \longrightarrow \mathcal{J}_n(1) \longrightarrow \mathcal{H}_n(1) \longrightarrow 1. \] The set $Y := \{\phi P_{j1} \phi^{-1} \mid \phi \in \ppia_n(1), 1 < j \leq n\}$ generates $\mathcal{J}_n(1)$ by Proposition \ref{birman}. Let $a_j$ denote the image of $P_{j1}$ in $\gln$. A direct calculation verifies that the set $\{a_j\}$ is a free abelian basis for $\mathcal{H}_n(1)$.

For $\phi \in \ppia_n(1)$, let $\bar \phi$ denote the image of $\phi$ in $\Gamma_n[2](1)$, and let $\bar Y$ denote the image of $Y$. The set of relations
\[\{[a_i, a_j] = 1, \phantom{i} \bar \phi a_i \bar \phi^{-1} = \alpha_1(\phi)(a_i) \mid 1 < i \neq j \leq n, \phantom{i} \phi \in \ppia_n(1) \},\] together with the generating set $\bar Y$, forms a presentation for $\mathcal{H}_n(k)$. It is clear that the image of any member of $Y$ in $\mathcal{H}_n(1)$ is a word on the free abelian basis $\{a_i \}$, and that this word is determined by the homomorphism $\alpha_1$.

The group $\mathcal{J}_n(1) \cap \ptor_n$ is normally generated in $\mathcal{J}_n(1)$ by the obvious lifts of the (infinitely many) relators in the given presentation for $\mathcal{H}_n(1)$. The relators of the form $[a_i,a_j]$ have trivial lift, and so are not required in the generating set. Let $C$ be the finite generating set for $\ppia_n(1)$ given by Corollary \ref{purestabs}. It can be shown that the obvious lift of the finite set of relators $$ D:= \{\bar c a_j {\bar c}^{-1} \alpha_1(c)(a_j)^{-1} \mid c \in C^{\pm1}, 1 < j \leq n \}$$ suffices to normally generate $\mathcal{J}_n(1) \cap \ptor_n$. This may be seen using a simple induction argument on the length of a given expression of $\phi \in \ppia_n(1)$ on $C^{\pm1}$.

All that remains is to verify that the obvious lift of $D$ is the set given in the statement of the lemma; this is a straightforward calculation.
\end{proof}

\textbf{Proof of Theorem A.} We now prove Theorem A, using the action of $\ptor_n$ on $\pbc_n$.

\begin{proof}[Proof of Theorem A] Recall that the set of doubled commutator transvections in $\ptor_n$ is precisely the conjugacy class of $[P_{12},P_{13}]$ in $\pia_n$, and that the set of separating $\pi$-twists in $\ptor_n$ is precisely the conjugacy class of \[ (P_{23}{P_{13}}^{-1}P_{31}P_{32}P_{12}{P_{21}}^{-1})^2 \] in $\pia_n$.

The group $\ptor_n$ acts on $\pbc_n$ simplicially and without rotations. Combining Theorems \ref{pbcconn}, \ref{quoconn} and \ref{armstrong}, we conclude that for $n \geq 4$, $\ptor_n$ is generated by the vertex stabilisers of the action on $\pbc_n$. 

Let $\ptor_n(1)$ denote the stabiliser of the vertex $x_1$. Since $\pia_n$ acts transitively on the vertices of $\pbc_n$, the stabiliser in $\ptor_n$ of any vertex is conjugate in $\pia_n$ to $\ptor_n(1)$. Lemma \ref{commdiag} gives us the split short exact sequence \[ 1 \longrightarrow \mathcal{J}_n(1) \cap \ptor_n \longrightarrow \ptor_n(1) \longrightarrow \ptor_{n-1} \longrightarrow 1 . \]

We induct on $n$. By the above split short exact sequence, to generate $\ptor_n(1)$ it suffices to combine a generating set of $\mathcal{J}_n(1) \cap \ptor_n(1)$ with a lift of one of $\ptor_{n-1}$.

We begin with the base case, $n=3$. In Section 5, we verify that the presentation of $\Gamma_3[2]$ given in Corollary \ref{congpres} is correct when $n=3$. Given the short exact sequence
\[ 1 \longrightarrow \ptor_3 \longrightarrow \ppia_3 \longrightarrow \Gamma_3[2] \longrightarrow 1, \] we may take the obvious lifts of the relators in this presentation as a normal generating set for $\ptor_3$ in $\ppia_3$. Relators 1--7 are trivial when lifted. Relator 8 lifts to $[P_{ij},P_{ik}]$ and relator 9 lifts to $[P_{jk}, P_{ij}]{P_{ik}}^{-2}$, which equals $P_{ik} [ P_{ij},P_{ik}] {P_{ik}}^{-1}$. Thus the lifts of relators 8 and 9 are conjugate to $[P_{12}, P_{13}]$ in $\pia_3$. Finally, relator 10 lifts to \[ (P_{23}{P_{13}}^{-1}P_{31}P_{32}P_{12}{P_{21}}^{-1})^2, \] so the base case $n=3$ is true, as each relator lifts to either a doubled commutator transvection, a separating $\pi$-twist or the identity automorphism.

Now suppose $n > 3$. By induction, the group $\ptor_{n-1}$ is generated by the purported generating set. We lift this generating set to $\ptor_n(1)$ in the obvious way.

By Lemma \ref{jcap}, we need only add in $\mathcal{J}_n(1)$-conjugates of the words $[P_{ij},P_{i1}]$ and  $[P_{ij},P_{j1}]P_{i1}^2$, for $1 < i \neq j \leq n$. The former are clearly conjugate in $\pia_n$ to the doubled commutator transvection $[P_{12},P_{13}]$. For the latter, observe that \[ [P_{ij},P_{j1}]P_{i1}^2 = [P_{ij},P_{i1}^{-1}], \] which again is conjugate in $\pia_n$ to $[P_{12},P_{13}]$, so we are done.
\end{proof}

Theorem A allows us to conclude that $\ptor_n$ is normally generated in $\pia_n$ by the automorphisms $\chi_1 = [P_{12},P_{13}]$ and \[ \chi_2 = (P_{23}{P_{13}}^{-1}P_{31}P_{32}P_{12}{P_{21}}^{-1})^2. \] Let $\Omega_n \leq \pia_n$ denote the symmetric group on $X$. The presentation for $\Gamma_n[2] \cong \ppia_n / \ptor_n$ given in Corollary \ref{congpres} follows from Theorem A by adding the $\Omega_n$-orbits of $\chi_1$ and $\chi_2$ to Collins' presentation for $\ppia_n$ as relators, then applying the obvious Tietze transformations.

We now demonstrate that the presence of separating $\pi$-twists in our generating set for $\ptor_n$ is necessary.
\begin{prop}\label{necgen}For $n \geq 3$, the group generated by doubled commutator transvections is a proper subgroup of $\ptor_n$.\end{prop}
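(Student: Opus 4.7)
The set of doubled commutator transvections is the $\pia_n$-conjugacy class of $\chi_1 = [P_{12},P_{13}]$, so the subgroup $N$ it generates is automatically $\pia_n$-invariant. To show $\chi_2 \notin N$, I would exhibit a $\pia_n$-equivariant homomorphism $\psi\colon \ptor_n \to A$ for some abelian $\pia_n$-module $A$, with $\psi(\chi_1) = 0$ but $\psi(\chi_2) \neq 0$; by equivariance $\psi$ then vanishes on every $\pia_n$-conjugate of $\chi_1$, hence on all of $N$.

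The natural source of $\psi$ is the second Johnson homomorphism. A direct computation modulo $F_n^{(3)}$ shows that both $\chi_1$ and $\chi_2$ act trivially on $F_n/F_n^{(3)}$ (using the centrality of commutators modulo $F_n^{(3)}$, the congruence $[y_2,y_3]^{\rev} \equiv [y_2,y_3]^{-1}$ modulo $F_n^{(3)}$, and the fact that $d := y_1^{-1}y_2^{-1}y_3^{-1}y_1y_2y_3 \in F_n^{(2)}$). Both elements therefore lie in the second Johnson kernel $J_2\subset\IA_n$, on which the second Johnson homomorphism $\tau_2\colon J_2 \to \Hom(H,L_3)$ is defined and $\pia_n$-equivariant for the natural actions, where $H = F_n^{\mathrm{ab}}$ and $L_3 = F_n^{(3)}/F_n^{(4)}$ is the degree-three part of the free Lie algebra on $H$.

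Via the Magnus expansion one computes $\tau_2(\chi_1)(v_1) = [v_1+v_2+v_3, [v_2,v_3]]$ with $\tau_2(\chi_1)(v_i)=0$ for $i \neq 1$, whereas $\tau_2(\chi_2)$ is supported on all of $v_1, v_2, v_3$ and its values involve the genuinely trilinear element $W := [v_1,v_2]+[v_1,v_3]+[v_2,v_3]$; in particular the Lie monomial $[v_2,[v_1,v_2]]$ appears with nonzero coefficient in $\tau_2(\chi_2)(v_1)$ but not in $\tau_2(\chi_1)(v_1)$. Composing $\tau_2$ with an appropriate $\pia_n$-equivariant projection $\lambda\colon\Hom(H,L_3)\to A$ that detects this distinction yields the required $\psi$, and an induction on $n$ via the Birman exact sequence and vertex-stabiliser analysis developed in Section~3 reduces the general statement to the base case $n=3$.

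The main obstacle is the construction of $\lambda$: one must ensure it kills the full $\pia_n$-orbit of $\tau_2(\chi_1)$ (not merely $\tau_2(\chi_1)$ itself) while still detecting $\tau_2(\chi_2)$. This requires decomposing $\Hom(H,L_3)$ as a module over the image $\Gamma_n[2] \rtimes \Omega^{\pm1}(X) \subset GL_n(\Z)$ of $\pia_n$ and locating $\tau_2(\chi_2)$ in a submodule disjoint from the orbit of $\tau_2(\chi_1)$.
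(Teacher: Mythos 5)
There is a genuine gap, and it sits exactly where you flag it yourself: the construction of $\lambda$. The preparatory material is fine --- both $\chi_1$ and $\chi_2$ do act trivially on $F_n/F_n^{(3)}$, the formula $\tau_2(\chi_1)(v_1)=[v_1+v_2+v_3,[v_2,v_3]]$ is correct, and equivariance of $\tau_2$ does reduce the problem to a module-theoretic question. But that question --- whether $\tau_2(\chi_2)$ lies outside the subgroup of $\Hom(H,L_3)$ generated by the entire $\Gamma_n[2]\rtimes\Omega^{\pm1}(X)$-orbit of $\tau_2(\chi_1)$, not merely whether some Lie monomial absent from $\tau_2(\chi_1)(v_1)$ occurs in $\tau_2(\chi_2)(v_1)$ --- is precisely the content of the proposition, and you neither perform the computation nor give any reason the required separating submodule exists. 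A priori it could fail: the $\Z$-span of an orbit under a finite-index subgroup of $\gln$ acting on $H^{*}\otimes L_3$ tends to be very large, and nothing you say rules out $\tau_2(\chi_2)$ landing in it. In addition, the closing appeal to induction on $n$ is not sound as stated: the subgroup $\mathcal{D}$ generated by doubled commutator transvections in $\ptor_n$ is the normal closure of $\chi_1$ in $\pia_n$, and its intersection with $\ptor_3\le\ptor_n$ may be strictly larger than the normal closure of $\chi_1$ in $\pia_3$, so the case $n=3$ does not transfer upward (nor is induction needed if $\psi$ is built for each $n$ directly).

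For contrast, the paper's argument is entirely elementary and avoids Johnson homomorphisms: it presents $\ppia_n/\mathcal{D}$ by adjoining the $\Omega_n$-orbit of $\chi_1$ to Collins' presentation (yielding relators 1--9 of Corollary~\ref{congpres} after Tietze moves), then kills every generator except $S_{12}$ and $S_{21}$ to map onto $\langle A\rangle\ast\langle B\rangle\cong \Z/2\ast\Z/2$; the image of $\chi_2$ is $ABAB\neq 1$, so $\chi_2\notin\mathcal{D}$. That quotient incidentally shows an abelian equivariant obstruction does exist (the image of $\ptor_n$ in $\Z/2\ast\Z/2$ lies in the infinite cyclic subgroup $\langle (AB)^2\rangle$, so $\chi_2$ already survives in $\ptor_n/(\mathcal{D}\cdot[\ptor_n,\ptor_n])$), but whether $\tau_2$ in particular realises that obstruction is exactly what your proposal leaves unproved.
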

\begin{proof}Let $\mathcal{D}$ denote the subgroup of $\ptor_n$ generated by doubled commutator transvections. In other words, $\mathcal{D}$ is the normal closure of $\chi_1 = [P_{12},P_{13}]$ in $\pia_n$. Then the $\Omega_n$-orbit of $\chi_1$ is a normal generating set for $\mathcal{D}$ in $\ppia_n$. Adding the members of this orbit to the presentation of $\ppia_n$ as relators yields a finite presentation $\mathcal{Q}$ of $\ppia_n / \mathcal{D}$, which may be altered using Tietze transformations so that it looks like the presentation in Corollary \ref{congpres}, with relator 10 (and relator 7, if $n=3$) removed, (where we interpret $S_{ij}$ and $O_i$ as formal symbols, rather than matrices). We shall show that the relations of $\mathcal{Q}$ are not a complete set of relations on the generating set $\{ S_{ij}, O_i \}$ for $\Gamma_n[2] \cong \ppia_n / \ptor_n$, and so conclude that $\mathcal{D} \neq \ptor_n$.

It is easily shown that \[ \xi := (S_{32}{S_{31}}^{-1}S_{13}S_{23}S_{21}{S_{12}}^{-1})^2, \] the image of $\chi_2$ in $\Gamma_n[2]$, is trivial, but we shall show that $\xi$ is non-trivial in the group presented by $\mathcal{Q}$. Observe that by trivialising all the generators of $\Gamma_n[2]$ except for $S_{12}$ and $S_{21}$, we surject $\Gamma_n[2]$ onto the free Coxeter group generated by the images of $S_{12}$ and $S_{21}$, say $A$ and $B$, respectively. This is easily verified by examining the relators of $\mathcal{Q}$. The image of $\xi$ under this map is $ABAB \neq 1$, and so $\xi$ is non-trivial in the group presented by $\mathcal{Q}$. Therefore $\mathcal{D}$ is a proper subgroup of $\ptor_n$.
\end{proof}
Note that in the proof of Proposition \ref{necgen} we also showed that relators 1--9 of Corollary \ref{congpres} are not a sufficient set of relators that hold between the $O_i$ and $S_{jk}$, as relator 10 is not a consequence of the others. This allows us to conclude that the quotient space $\pbc_3 / \ptor_3$ is not simply-connected.
\begin{corollary}\label{not1conn} The complex $\pbc_3 / \ptor_3$ is not simply-connected. \end{corollary}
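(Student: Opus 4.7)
The strategy is to apply the contrapositive of Armstrong's theorem (Theorem \ref{armstrong}). Since $\pbc_3$ is simply-connected by Theorem \ref{pbcconn} and the action of $\ptor_3$ on $\pbc_3$ is simplicial and without rotations, it suffices to exhibit an element of $\ptor_3$ that does not lie in the subgroup generated by vertex stabilisers. By Proposition \ref{necgen}, the subgroup $\mathcal{D} \leq \ptor_3$ generated by doubled commutator transvections is proper, so the plan is to show that \emph{every} vertex stabiliser of the $\ptor_3$-action on $\pbc_3$ is contained in $\mathcal{D}$.

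First, because $\pia_3$ acts transitively on the vertex set of $\pbc_3$, every vertex stabiliser in $\ptor_3$ is conjugate within $\pia_3$ to $\ptor_3(1) := \Stab_{\ptor_3}(x_1)$; since $\mathcal{D}$ is a normal subgroup of $\pia_3$, it suffices to show $\ptor_3(1) \leq \mathcal{D}$. The split short exact sequence of Lemma \ref{commdiag} restricted to $\ptor_3$ gives
\[ 1 \longrightarrow \mathcal{J}_3(1) \cap \ptor_3 \longrightarrow \ptor_3(1) \longrightarrow \ptor_2 \longrightarrow 1, \]
and since $\ptor_2 = 1$ (as noted in the discussion preceding Proposition \ref{birman}), we obtain $\ptor_3(1) = \mathcal{J}_3(1) \cap \ptor_3$.

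Next, apply Lemma \ref{jcap} with $n=3$: the group $\mathcal{J}_3(1) \cap \ptor_3$ is normally generated in $\mathcal{J}_3(1)$ by the elements $[P_{ij}, P_{i1}]$ and $[P_{ij}, P_{j1}] P_{i1}^2$ for $1 < i \neq j \leq 3$. Each generator of the first type is clearly $\pia_3$-conjugate to $\chi_1 = [P_{12}, P_{13}]$; each of the second type satisfies $[P_{ij},P_{j1}] P_{i1}^2 = [P_{ij}, P_{i1}^{-1}]$ (exactly as in the proof of Theorem A), which is again $\pia_3$-conjugate to $\chi_1$. Since $\mathcal{J}_3(1) \leq \pia_3$ and $\mathcal{D}$ is the normal closure of $\chi_1$ in $\pia_3$, the $\mathcal{J}_3(1)$-conjugates of these generators all lie in $\mathcal{D}$, giving $\ptor_3(1) \leq \mathcal{D}$.

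Therefore the subgroup of $\ptor_3$ generated by all vertex stabilisers of the action on $\pbc_3$ is contained in $\mathcal{D}$, which is a proper subgroup of $\ptor_3$ by Proposition \ref{necgen}. Hence $\ptor_3$ is not generated by the vertex stabilisers, and by Theorem \ref{armstrong} the quotient $\pbc_3 / \ptor_3$ is not simply-connected. The only step requiring care is tracking that the generators supplied by Lemma \ref{jcap} truly land in $\mathcal{D}$ (not just in some larger conjugacy-invariant set), but this is immediate from normality of $\mathcal{D}$ in $\pia_3$.
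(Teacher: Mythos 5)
Your proof is correct and follows essentially the same route as the paper: the paper likewise argues via the contrapositive of Armstrong's theorem, reduces to the single stabiliser $\ptor_3(1)$ by transitivity and normality, identifies $\ptor_3(1)$ with the normal closure of $[P_{12},P_{13}]$ using ``the same calculations as in the proof of Theorem A'' (i.e.\ the Birman sequence with $\ptor_2=1$ and Lemma~\ref{jcap}), and concludes by Proposition~\ref{necgen}. You have merely made explicit the calculations the paper cites by reference.
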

\begin{proof}By Theorem \ref{armstrong}, the complex $\pbc_3 / \ptor_3$ is simply-connected if and only if $\ptor_3$ is generated by the vertex stabilisers of the action of $\ptor_3$ on $\pbc_3$. As in the proof of Theorem A, the group generated by the vertex stabilisers of this action may be normally generated in $\pia_3$ by the group $\ptor_3(1)$. The same calculations as in the proof of Theorem A show that $\ptor_3(1)$ is the normal closure of the doubled commutator transvection $[P_{12}, P_{13}]$. However, Proposition \ref{necgen} showed that this normal closure is a proper subgroup of $\ptor_3$, so the quotient $\pbc_3 / \ptor_3$ is not simply-connected.\end{proof}

\section{The connectivity of $\pbc_n$ and its quotient} In this section, we determine the levels of connectivity of $\pbc_n$ and $\pbc_n / \ptor_n$. The former is found to be simply-connected, following the same approach as Day--Putman \cite{DayPutman}, while the latter is shown to be closely related to a complex already studied by Charney \cite{Charney}, which is $(n-3)$-connected.

\textbf{The connectivity of $\pbc_n$.} First, we recall the definition of the Cayley graph of a group. Let $G$ be a group with finite generating set $S$. The \emph{Cayley graph of $G$ with respect to $S$}, denoted $\mathrm{Cay}(G,S)$, is the graph with vertex set $G$ and edge set $\{(g,gs) \mid g \in G, s \in S^{\pm1} \}$, where an ordered pair $(x,y)$ indicates that vertices $x$ and $y$ are joined by an edge. If $s \in S$ has order 2, we identify each pair of edges $(g,gs)$ and $(g,gs^{-1})$ for each $g \in G$, to ensure that the Cayley graph is simplicial. Similarly, we also insist that the identity element of $G$ is excluded from $S$.

We establish Theorem \ref{pbcconn} by constructing a map $\Psi$ from the Cayley graph of $\pia_n$ to $\pbc_n$ and demonstrating that the induced map of fundamental groups is both surjective and trivial. We require the Cayley graph of $\pia_n$ with respect to a particular generating set, which we now describe. Assume that $n \geq 3$. For $1 \leq i \neq j < n$, let $t_{ij}$ permute $x_i$ and $x_j$, fixing $x_k$ with $k \neq i, j$. Using the symmetric group action on $X$, we deduce from Proposition \ref{palgens} that we may generate $\pia_n$ using the set \[ Z:= \{ t_{ij}, \iota_2, \iota_3, P_{21}, P_{23}, P_{31}, P_{34} \mid 1 \leq i \neq j \leq n \}. \]

We may use the symmetric group action on $X$ to streamline the presentation of $\pia_n$ given in Section 2, to obtain the following list of defining relators for $\pia_n$ on the generating set $Z$:
\begin{multicols}{2}
\begin{enumerate}
\item $t_{ij} = t_{ji}$,
\item ${t_{ij}}^2 = 1$,
\item $ut_{ij}u^{-1} = t_{u(i)u(j)}$,
\item ${\iota_2}^2 = 1$,
\item $(\iota_2 \iota_3)^2 = 1$,
\item $[\iota_2, P_{31}] = 1$,
\item $(\iota_2P_{21})^2 = 1$,
\item $(\iota_3P_{23})^2 = 1$,
\item $P_{23}P_{31}P_{21} = {P_{21}}^{-1} P_{31}P_{23}$,
\item $[P_{21},P_{31}] = 1$,
\item $[P_{21},P_{34}] = 1$,
\item $\iota_3 = t_{23}\iota_2 t_{23}$,
\item $P_{31} = t_{23}P_{21}t_{23}$,
\item $P_{23} = t_{13} P_{21} t_{13}$,
\item $P_{34} = t_{14}t_{23} P_{21} t_{23} t_{14}$,
\item $P_{21} = w P_{21} w^{-1}$ for $w \in \mathcal{W}$,
\item $\iota_2 = v \iota_2 v^{-1}$ for $v \in \mathcal{V}$,
\end{enumerate}
\end{multicols}
where $1 \leq i \neq j \leq n$, $u \in \{ t_{ij} \}$, and $\mathcal{W}$ and $\mathcal{V}$ are the sets of words on $\{t_{ij} \}$ that fix both $x_1$ and $x_2$, and only $x_2$, respectively. The relations of type 16 and 17 arise due to the streamlining of the presentation of $\pia_n = \epia_n \rtimes \Omega^{\pm1}(X)$ given in Section 2. Note that relations 1-3 are a complete set of relations for the symmetric group, when generated by the transpositions $\{t_{ij} \}$ \cite{Putman}.

We now consider the Cayley graph $\cay(\pia_n,Z)$. Observe that for each $z \in Z$, either $z(x_1) = x_1$ or $\{x_1, z(x_1) \}$ forms a partial $\pi$-basis for $F_n$. This allows us to construct a map of complexes from the star of the vertex 1 in $\cay(\pia_n,Z)$ to $\pbc_n$, by mapping an edge $z \in Z^{\pm1}$ to the edge $v_1 - z(v_1)$ (which may be degenerate). Using the actions of $\pia_n$ on $\cay(\pia_n,Z)$ and $\pbc_n$, we can extend this map to a map of complexes $\Psi : \cay(\pia_n,Z) \to \pbc_n$. Explicitly, $\Psi$ takes a vertex $z_1 \ldots z_r$ of $\cay(\pia_n,Z)$ ($z_i \in Z^{\pm1}$) to the vertex $z_1 \ldots z_r (x_1)$.

\begin{proof}[Proof of Theorem \ref{pbcconn}] This proof is modelled on Day--Putman's proof of their Theorem A \cite{DayPutman}. Let $$\Psi_\ast : \pi_1(\cay(\pia_n,Z),1) \to \pi_1 (\pbc_n, x_1)$$ be the map of fundamental groups induced by $\Psi$. Explicitly, the image of a loop $z_1 \ldots z_k$ ($z_i \in Z^{\pm1}$) in $\pi_1(\cay(\pia_n,Z),1)$ under $\Psi_\ast$ is \[x_1 - z_1(x_1) - z_1z_2 (x_1) - \ldots z_1z_2 \ldots - z_k(x_1) = x_1.\] We first show that $\Psi_\ast$ is the trivial map, then show that it is also surjective.

Recall that the Cayley graph $C$ of a group $G$ with presentation $\langle X \mid R \rangle$ forms the 1-skeleton of its \emph{Cayley complex}, which we obtain by attaching disks along the loops in $C$ corresponding to all conjugates in $G$ of the words in $R$. It is well-known that the Cayley complex of a group $G$ is simply-connected \cite[Proposition 4.2]{LyndonSchupp}. We now verify that the loops in $\pi_1(\cay(\pia_n,Z),1)$ corresponding to the relators in above the streamlined presentation for $\pia_n$ have trivial image under $\Psi_\ast$. This allows us to extend $\Psi$ to a map from the (simply-connected) Cayley complex of $\pia_n$ (rel. $Z$), and so conclude that $\Psi_\ast$ is trivial.

Note that in the following we confuse a relator with the loop in $\pi_1(\cay(\pia_n,Z),1)$ to which it corresponds. Many of the relators 1--17 map to $x_1$ in $\pbc_n$, as they are words on members of $Z$ that fix $x_1$. The only ones we need to check are 1-3 and 14-17. Relators 1--3 map into the contractible simplex spanned by $x_1, \ldots, x_n$, so are trivial. Relators 14 and 15 are mapped into the simplices $x_1 - x_3$ and $x_1 - x_4$, respectively. We rewrite relators 16 and 17 as $P_{21}w = wP_{21}$ and $\iota_2 v = v \iota_2$. It is clear, then, that relators of type 16 map into the contractible subcomplex of $\pbc_n$ spanned by $x_1, \ldots, x_n$ and $x_1x_2x_1$, and relators of type 17 map into the contractible subcomplex spanned by $x_1, {x_2}^{\pm1}, \ldots, x_n$. All relators have now been dealt with, so we conclude that $\Psi_\ast$ is the trivial map.

We argue as in Day--Putman's proof \cite{DayPutman} for the surjectivity of $\Psi_\ast$. We represent a loop $\omega \in \pi_1 (\pbc_n, x_1)$ as \[x_1 = w_0 - w_1 - \ldots - w_k = x_1,\] for some $k \geq 0$. We will demonstrate that for any such path (not necessarily with $w_k = x_1$), there exist $\phi_1, \ldots, \phi_k \in \pia_n(1)$ such that \[ w_i = \phi_1 t_{12} \phi_2 t_{12} \ldots \phi_i t_{12} (x_1),\] for $0 \leq i \leq k$. We use induction. In the case $k=0$, there is nothing to prove. Now suppose $k > 0$. Consider the subpath \[ w_0 - w_1 - \ldots - w_{k-1}. \] By induction, to prove the claim all we need find is $\phi_k \in \pia_n(1)$ such that \[w_k = \phi_1 t_{12} \ldots \phi_k t_{12}(x_1). \] We know that $w_{k-1} = \phi_1 t_{12} \ldots \phi_{k-1} t_{12}(x_1)$ and $w_k$ form a partial $\pi$-basis, therefore so do $x_1$ and $(\phi_1 t_{12} \ldots \phi_{k-1} t_{12})^{-1}(w_k)$. By construction, the action of $\pia_n$ is transitive on the set of two-element partial $\pi$-bases, so there exists $\phi_k \in \pia_n(1)$ mapping $x_2$ to $(\phi_1 t_{12} \ldots \phi_{k-1} t_{12})^{-1}(w_k)$. Therefore \[w_k = \phi_1 t_{12} \ldots \phi_k t_{12}(x_1), \] as required.

Now, we define \[\phi_{k+1} = (\phi_1 t_{12} \ldots \phi_k t_{12})^{-1},\] so that \[R:= \phi_1 t_{12} \ldots \phi_k t_{12} \phi_{k+1} = 1\] is a relation in $\pia_n$. Observe that since $w_k = x_1$, we have $\phi_{k+1} \in \pia_n(1)$. Also, the generating set $Z$ contains a subset that generates $\pia_n(1)$, by Proposition \ref{palgens}. We are thus able to write \[\phi_i = z^i_1 \ldots z^i_{p_i} ,\] for some $z^i_{j} \in Z^{\pm1}$ ($1 \leq i \leq k+1$, $1 \leq j \leq p_i$), each of which fixes $x_1$. We see that $R \in \pi_1 (\mathrm{Cay}(\pia_n, Z), 1)$ maps to $\omega \in \pi_1 (\pbc_n,x_1)$. Removing repeated vertices, $R$ maps to \[ x_1 - \phi_1t_{12}(x_1) - \ldots - \phi_1t_{12}\ldots \phi_kt_{12}(x_1) = x_1, \] which equals $\omega$ by construction. Hence $\Psi_\ast$ is surjective as well as trivial, so $\pi_1 (\pbc_n, x_1) = 1$.
\end{proof}

\textbf{The connectivity of $\pbc_n / \ptor_n$.} A complex analogous to $\pbc_n$ may be defined when working over $\Z^n$ rather than $F_n$. We write $\mathcal{B}_n(\Z)$ for the \emph{complex of partial bases of $\Z^n$}, whose $(k-1)$-simplices correspond to subsets $\{u_1, \ldots, u_k\}$ of free abelian bases of $\Z^n$. Writing members of $\Z^n$ multiplicatively, there is an analogous notion of an odd palindrome on some fixed free abelian basis $V$, and so also of a partial $\pi$-basis. The \emph{complex of partial $\pi$-bases of $\Z^n$} is defined in the obvious way, and denoted $\pbc_n(\Z)$. Just as $\pia_n$ acts transitively on the set of $\pi$-bases of $F_n$, so does $\Gamma_n[2]$ act transitively on the set of $\pi$-bases of $\Z^n$, as we now verify.

\begin{lemma} \label{transact} The group $\Gamma_n[2]$ acts transitively on the set of $\pi$-bases of $\Z^n$.
\end{lemma}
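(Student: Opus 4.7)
The plan is to use the characterisation of odd palindromes in $\Z^n$ via their mod-$2$ reduction. Since an odd palindrome $w^{\mathrm{rev}} x_i^{\epsilon} w$ in $F_n$ abelianises to $v_i^{\epsilon} \cdot \prod_j v_j^{2 b_j}$, an odd palindrome in $\Z^n$ (written multiplicatively on the basis $V = \{v_1, \ldots, v_n\}$) is exactly an element with one odd coordinate and all other coordinates even. Hence a $\pi$-basis $B = \{u_1, \ldots, u_n\}$ of $\Z^n$ corresponds, after choosing an ordering, to a matrix $A \in \GL(n, \Z)$ whose $k$-th column encodes $u_k$ and which has exactly one odd entry per column.

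I would then observe that since $\overline{A} \in \GL(n, \Z/2)$ has a single nonzero entry per column, it must be a permutation matrix $P_{\sigma}$ for some $\sigma \in S_n$. Permuting the columns of $A$ by $\sigma^{-1}$ gives a matrix $A'$ with odd diagonal entries and even off-diagonal entries, so $A' \equiv I \pmod 2$ and hence $A' \in \Gamma_n[2]$. The action of $A'$ on the standard basis sends $V$ to the reordered basis $(u_{\sigma^{-1}(1)}, \ldots, u_{\sigma^{-1}(n)})$, which equals $B$ as an unordered set. Thus every $\pi$-basis of $\Z^n$ lies in the $\Gamma_n[2]$-orbit of $V$, giving transitivity.

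The one subtlety I foresee is that $\pi$-bases must be interpreted as unordered sets. Acting by $\Gamma_n[2]$ preserves the mod-$2$ reduction of the matrix of an ordered basis, so transitivity on \emph{ordered} $\pi$-bases would fail; the freedom to reorder is precisely what lets us absorb the permutation $P_\sigma$. Beyond this interpretive point, I do not expect any real obstacle, as the argument reduces to an elementary parity count followed by identifying the resulting matrix with an element of $\Gamma_n[2]$.
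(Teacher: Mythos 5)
Your proof is correct and follows essentially the same route as the paper, which simply asserts ``by definition'' that every $\pi$-basis has the form $\{Mv_1,\dots,Mv_n\}$ with $M \in \Gamma_n[2]$ and concludes transitivity from that. Your parity count and the observation that the mod-$2$ reduction is a permutation matrix (absorbed by reordering the unordered basis) is precisely the justification the paper leaves implicit, including the correct remark that transitivity holds only for unordered $\pi$-bases.
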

\begin{proof}By definition, any $\pi$-basis is of the form $\{ Mv_1, \dots, Mv_n \}$, for $M \in \Gamma_n[2]$ and $\{v_1, \dots , v_n \}$ is the standard basis of $\Z^n$, where $v_i$ has 1 in the $i$th positions and 0s elsewhere. Thus, we have a well-defined action of $\Gamma_n[2]$ on the set of $\pi$-bases of $\Z^n$ by left-multiplication of basis elements, which is transitive, as every $\pi$-basis lies in the same orbit as $\{v_1, \dots, v_n \}$.
\end{proof}

We first show that $\pbc_n / \ptor_n \cong \pbc_n(\Z)$, then show that $\pbc_n(\Z)$ is $(n-3)$-connected using a related complex studied by Charney. To prove the former, the following lemma is required.
\begin{lemma}\label{bascomp}Fix $\{u_1,\ldots,u_n\}$ as a $\pi$-basis for $\Z^n$, and let $\rho : F_n \to \Z^n$ be the abelianisation map. Let $\tilde U = \{\tilde u_1, \ldots, \tilde u_k\}$ be a partial $\pi$-basis of $F_n$ such that $\rho(\tilde u_i) = u_i$ for each $1 \leq i \leq k$. Then we can extend $\tilde U$ to a $\pi$-basis of $F_n$, $\{\tilde u_1, \ldots, \tilde u_n\}$, such that $\rho(\tilde u_i) = u_i$ for $1 \leq i \leq n$.
\end{lemma}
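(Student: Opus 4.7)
The plan is to first extend $\tilde U$ to an arbitrary $\pi$-basis of $F_n$ and then adjust that extension by an automorphism that fixes $\tilde u_1,\ldots,\tilde u_k$, so as to produce the prescribed abelianizations.

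First I would use that $\tilde U$ is a partial $\pi$-basis to extend it to a $\pi$-basis $\mathcal{B}=\{\tilde u_1,\ldots,\tilde u_k,\tilde w_{k+1},\ldots,\tilde w_n\}$ of $F_n$. The natural action of $\pia_n$ on ordered $\pi$-bases of $F_n$ is transitive (given any ordered $\pi$-basis $(y_1,\ldots,y_n)$, the assignment $x_i\mapsto y_i$ extends to an element of $\pia_n$), so there is some $\alpha\in\pia_n$ with $\alpha(x_i)=\tilde u_i$ for $i\leq k$ and $\alpha(x_i)=\tilde w_i$ for $i>k$. Writing $\bar\alpha\in\GL(n,\Z)$ for the induced automorphism of $\Z^n$ and setting $u_i':=\bar\alpha^{-1}(u_i)$, the hypothesis $\rho(\tilde u_i)=u_i$ for $i\leq k$ gives $u_i'=v_i$ for $i\leq k$, so $\{v_1,\ldots,v_k,u_{k+1}',\ldots,u_n'\}$ is a $\pi$-basis of $\Z^n$. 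The problem then reduces to constructing $\tilde N\in\pia_n(k)$ whose induced map on $\Z^n$ sends $v_i$ to $u_i'$ for all $i$: setting $\tilde u_i:=\alpha\tilde N(x_i)$ for $i>k$ yields the desired extension, since $\alpha\tilde N\in\pia_n$ takes $\{x_1,\ldots,x_n\}$ to a $\pi$-basis of $F_n$ that agrees with $\tilde U$ on the first $k$ entries (because $\tilde N$ fixes $x_1,\ldots,x_k$) and satisfies $\rho(\tilde u_i)=\bar\alpha(u_i')=u_i$.

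To construct $\tilde N$, let $N\in\GL(n,\Z)$ be the map $v_i\mapsto u_i'$. Since $\{u_1',\ldots,u_n'\}$ is a $\pi$-basis of $\Z^n$, the mod $2$ reduction $\overline{N}$ is a permutation matrix, and since $u_i'=v_i$ for $i\leq k$, this permutation fixes $\{1,\ldots,k\}$ pointwise. I would lift $\overline{N}$ to a signed-permutation matrix $M_1\in\GL(n,\Z)$ that fixes $v_1,\ldots,v_k$, and set $M_2:=M_1^{-1}N$. Then $M_2\in\Gamma_n[2](k)$, as it fixes $v_1,\ldots,v_k$ and reduces to the identity mod $2$. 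Now $M_1$ lifts to $\Omega^{\pm 1}(X)\cap\pia_n(k)$, which is a generating summand of $\pia_n(k)$ by Proposition~\ref{palgens}, while $M_2$ lifts to some $\tilde M_2\in\ppia_n(k)\subseteq\pia_n(k)$ via the vertical surjection $\ppia_n(k)\twoheadrightarrow\Gamma_n[2](k)$ supplied by Lemma~\ref{commdiag}. Their product is the required $\tilde N\in\pia_n(k)$.

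The main obstacle I anticipate is the last decomposition $N=M_1M_2$: I need \emph{both} factors to lie in the image of $\pia_n(k)$, not merely of $\pia_n$, so that their lifts fix $x_1,\ldots,x_k$. This requires a careful choice of the signed-permutation $M_1$ (to preserve $v_1,\ldots,v_k$ individually, not just as a set), after which the surjectivity part of Lemma~\ref{commdiag} handles $M_2$ routinely.
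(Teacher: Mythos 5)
Your proof is correct and follows essentially the same route as the paper: extend $\tilde U$ arbitrarily to a $\pi$-basis, then correct the last $n-k$ elements by lifting a suitable element fixing the first $k$ coordinates through the surjection $\ppia_n(k) \twoheadrightarrow \Gamma_n[2](k)$ of Lemma~\ref{commdiag}. The only difference is bookkeeping: you normalise by $\alpha$ and decompose the correcting matrix as a signed permutation times an element of $\Gamma_n[2](k)$, where the paper instead invokes transitivity of $\Gamma_n[2]$ on $\pi$-bases of $\Z^n$ (Lemma~\ref{transact}) to produce the correcting element directly.
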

\begin{proof}Extend $\{\tilde u_1, \ldots, \tilde u_k\}$ to a full $\pi$-basis of $F_n$, $\{\tilde u_1, \ldots, \tilde u_k, \tilde u_{k+1}', \ldots, \tilde u_n'\}$, and define $u_j' = \rho(\tilde u_j')$ for $k+1 \leq j \leq n$. Then $\{u_1, \ldots, u_k, u_{k+1}', \ldots, u_n' \}$ is a $\pi$-basis for $\Z^n$. By Lemma~\ref{transact}, the group $\Gamma_n[2]$ acts transitively on the set of $\pi$-bases of $\Z^n$, so there exists $\phi \in \Gamma_n[2](k)$ such that $\phi(u_j') = u_j$ for $k+1 \leq j \leq n$. By Proposition \ref{commdiag}, $\phi$ lifts to some $\tilde \phi \in \ppia_n(k)$, and the $\pi$-basis $\{\tilde u_1, \ldots, \tilde u_k, \tilde \phi( \tilde u_{k+1}'), \ldots, \tilde \phi (\tilde u_n ')\}$ projects onto $\{u_1, \ldots, u_n\}$ as desired.
\end{proof}
Now we establish an isomorphism of simplicial complexes between $\pbc_n / \ptor_n$ and $\pbc_n(\Z)$.
\begin{theorem}The spaces $\pbc_n / \ptor_n$ and $\pbc_n(\Z)$ are isomorphic as simplicial complexes.\end{theorem}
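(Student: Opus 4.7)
The plan is to construct the obvious map induced by the abelianisation $\rho\colon F_n \to \Z^n$ and verify it descends to a bijection of simplicial complexes. Explicitly, define $\Phi\colon \pbc_n \to \pbc_n(\Z)$ on vertices by $\Phi(\tilde u) = \rho(\tilde u)$, and extend simplicially. First I would check $\Phi$ is a well-defined simplicial map: if $\{\tilde u_1,\dots,\tilde u_k\}$ is a partial $\pi$-basis of $F_n$, then it extends to a $\pi$-basis $\{\tilde u_1,\dots,\tilde u_n\} = \phi(X)$ for some $\phi \in \pia_n$, and the image $\{\rho(\tilde u_1),\dots,\rho(\tilde u_n)\}$ is then the $\pi$-basis of $\Z^n$ obtained by applying the image of $\phi$ in $\Gamma_n[2]$ to the standard basis, so the first $k$ elements form a partial $\pi$-basis of $\Z^n$. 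Since $\ptor_n$ acts trivially on $H_1(F_n,\Z)$, the map $\Phi$ factors through $\pbc_n/\ptor_n$.

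Next I would prove that the induced map $\bar\Phi\colon \pbc_n/\ptor_n \to \pbc_n(\Z)$ is surjective on simplices. Given a partial $\pi$-basis $\{u_1,\dots,u_k\}$ of $\Z^n$, I would extend it to a full $\pi$-basis of $\Z^n$. By Lemma~\ref{transact}, this full $\pi$-basis is $M \cdot \{v_1,\dots,v_n\}$ for some $M \in \Gamma_n[2]$. Using the short exact sequence $1 \to \ptor_n \to \ppia_n \to \Gamma_n[2] \to 1$ (or directly Proposition~\ref{birman}) to lift $M$ to some $\tilde M \in \ppia_n$, the set $\tilde M(X)$ is a $\pi$-basis of $F_n$ whose first $k$ elements form a partial $\pi$-basis mapping onto $\{u_1,\dots,u_k\}$.

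The main obstacle, and the substantive step, is injectivity on simplices: if two partial $\pi$-bases $\{\tilde u_1,\dots,\tilde u_k\}$ and $\{\tilde v_1,\dots,\tilde v_k\}$ of $F_n$ satisfy $\rho(\tilde u_i) = \rho(\tilde v_i)$ for each $i$ (after permuting, which is irrelevant for simplicial identification), I must produce $\phi \in \ptor_n$ with $\phi(\tilde u_i) = \tilde v_i$. Here is where Lemma~\ref{bascomp} is essential: writing $u_i := \rho(\tilde u_i) = \rho(\tilde v_i)$, I first extend $\{u_1,\dots,u_k\}$ to a full $\pi$-basis $\{u_1,\dots,u_n\}$ of $\Z^n$, and then apply Lemma~\ref{bascomp} twice to extend both $\{\tilde u_i\}$ and $\{\tilde v_i\}$ to full $\pi$-bases $\{\tilde u_1,\dots,\tilde u_n\}$ and $\{\tilde v_1,\dots,\tilde v_n\}$ of $F_n$ whose abelianisations both equal $\{u_1,\dots,u_n\}$. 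Since $\pia_n$ acts transitively on the set of $\pi$-bases of $F_n$, there is a unique $\phi \in \pia_n$ with $\phi(\tilde u_i) = \tilde v_i$ for all $1 \leq i \leq n$; the image of $\phi$ in $\Gamma_n[2]$ fixes every $u_i$, hence is the identity, so $\phi \in \ptor_n$ as required.

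Finally, I would combine these steps. Injectivity and surjectivity on simplices, together with the obvious compatibility with face maps (a sub-partial-$\pi$-basis of a simplex maps to the corresponding sub-partial-$\pi$-basis of its image), give that $\bar\Phi$ is a bijection on simplices preserving the face relation. Hence $\bar\Phi$ is an isomorphism of simplicial complexes. No further identifications occur beyond those already present in $\pbc_n/\ptor_n$, which is precisely the content of the injectivity step.
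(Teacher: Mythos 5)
Your proposal is correct and follows essentially the same route as the paper: the same map $\Phi$ induced by abelianisation, surjectivity via lifting through the short exact sequence (which is the content of Lemma~\ref{bascomp}), and injectivity by extending both partial $\pi$-bases to full $\pi$-bases over a common $\pi$-basis of $\Z^n$ and observing that the resulting automorphism lies in $\ptor_n$. The only cosmetic difference is that you invoke Lemma~\ref{transact} and the exact sequence directly for surjectivity where the paper cites Lemma~\ref{bascomp}, but the underlying argument is identical.
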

\begin{proof}Let $\rho: F_n \to \Z^n$ be the abelianisation map, and define a map of simplicial complexes $\Phi : \pbc_n \to \pbc_n(\Z)$ on simplices by $\{w_1, \ldots, w_k \} \mapsto \{\rho(w_1), \ldots, \rho(w_k)\}$, for $1 \leq k \leq n$. The map $\Phi$ is surjective: by Lemma \ref{bascomp}, each $\pi$-basis of $\Z^n$ is projected onto by some $\pi$-basis of $F_n$, and $\pi$-bases of $\Z^n$ correspond to maximal simplices of $\pbc_n(\Z)$.

It is clear that the map $\Phi$ is invariant under the action of $\ptor_n$ on $\pbc_n$, and so $\Phi$ factors through $\pbc_n / \ptor_n$. To establish the theorem, all we need do is show that the induced map from $\pbc_n / \ptor_n \to \pbc_n(\Z)$ is injective. In other words, we must show that if two simplices $s, s'$ of $\pbc_n$ have the same image under $\Phi$, then $s$ and $s'$ differ by the action of some member of $\ptor_n$.

Suppose that $s = \{w_1, \ldots, w_k\}$ and $s' = \{w_1', \ldots, w_k'\}$ have the same image under $\Phi$. We may assume that $\rho(w_i) = \rho(w_i')$ for $1 \leq i \leq k$. Let $\Phi(s) = \{\bar w_1, \ldots, \bar w_k\}$, and extend this partial $\pi$-basis of $\Z^n$ to a full $\pi$-basis, $W = \{\bar w_1, \ldots, \bar w_n \}$. By Lemma \ref{bascomp}, we may extend $\{w_1, \ldots, w_k\}$ to $\{w_1, \ldots, w_n\}$ and $\{w_1', \ldots, w_k'\}$ to $\{w_1', \ldots, w_n'\}$, such that both of these full $\pi$-bases map onto $W$. Define $\theta \in \pia_n$ by $\theta(w_i) = w_i'$ for $1 \leq i \leq n$. By construction, $\theta(s) = s'$ and $\theta \in \ptor_n$, so the theorem is proved.
\end{proof}
This more explicit description of $\pbc_n / \ptor_n$ as $\pbc_n(\Z)$ enables us to investigate the quotient's connectivity.

\begin{proof}[Proof of Theorem \ref{quoconn}]By a \emph{unimodular sequence} in $\Z^n$, we mean an (ordered) sequence $(u_1, \ldots, u_k) \subset (\Z^n)^k$ whose entries form a basis of a direct summand of $\Z^n$. Observe that this is just an ordered version of the notion of a partial basis of $\Z^n$. The set of all such sequences of length at least one form a poset under subsequence inclusion. Charney considers (among others) the subposet of sequences $(u_1, \ldots, u_k)$ such that each $u_i$ is congruent to a standard basis vector $v_j$ under mod 2 reduction of the entries of $u_i$. We denote by $\mathcal{X}_n$ the poset complex given by the subposet of such sequences. Theorem 2.5 of Charney says that $\mathcal{X}_n$ is $(n-3)$-connected.

Let $\pbc_n(\Z)^\ast$ denote the barycentric subdivision of $\pbc_n(\Z)$. Label each vertex of $\pbc_n(\Z)^\ast$ by the partial $\pi$-basis associated to the simplex of $\pbc_n(\Z)$ to which the vertex corresponds. Define a simplicial map $h : \mathcal{X}_n \to \pbc_n(\Z)^\ast$ by $(u_1, \ldots, u_k) \mapsto \{u_1, \ldots, u_k\}$. We may think of $h$ as `forgetting the order' of each unimodular sequence. Comparing the definitions of $\mathcal{X}_n$ and $\pbc_n(\Z)$, it is not immediately clear that $h$ is well-defined, as there might be some vertex $(u_1,\ldots,u_k)$ of $\mathcal{X}_n$ such that $\{u_1, \ldots, u_k \}$ extends to a full basis of $\Z^n$, but not a full $\pi$-basis. However, viewing the full basis of $\Z^n$ as a matrix in $\Gamma_n[2]$, a straightforward column operations argument shows that this cannot be the case, so $h$ is well-defined.

We see that $h$ induces a map $\pi_i (\mathcal{X}_n) \to \pi_i (\pbc_n(\Z)^\ast)$ for $i \geq 0$, and show that the induced map is surjective. Set a consistent lexicographical order on the vertices of $\pbc_n(\Z)^\ast$, and view $\omega \in \pi_i (\pbc_n(\Z)^\ast)$ as a simplicial $i$-sphere. The chosen lexicographical ordering allows us to lift $\omega$ to $\pi_i (\mathcal{X}_n)$, so the induced maps are surjective. The statement of the theorem follows immediately, since $\pi_i(\mathcal{X}_n) = 1$ for $0 \leq i \leq n-3$.
\end{proof}

\section{A presentation for $\Gamma_3[2]$} In order to apply Armstrong's theorem \cite{Armstrongorbit}, it must be the case that $\pbc_n / \ptor_n \cong \pbc_n(\Z)$ is simply-connected. However, as we have seen from Corollary \ref{not1conn}, the space $\pbc_3(\Z)$ has non-trivial fundamental group. The case $n=3$ forms the base case of our inductive proof of Theorem \ref{mainthm}, so we require an alternative approach to find a generating set for $\ptor_3$. Our approach is to find a specific finite presentation of $\Gamma_3[2]$, and use the short exact sequence
\[ 1 \longrightarrow \ptor_3 \longrightarrow \ppia_3 \longrightarrow \Gamma_3[2] \longrightarrow 1 \] to lift the relators in the presentation of $\Gamma_3[2]$ to a normal generating set for $\ptor_3$.

\textbf{The augmented partial $\pi$-basis complex for $\Z^3$.} By adding simplices to the complex $\pbc_3(\Z)$, we obtain a simply-connected complex that $\Gamma_3[2]$ acts on. This action allows us to present $\Gamma_3[2]$.

Recall that $\mathcal{B}_n(\Z)$ is the partial basis complex of $\Z^n$. We represent its vertices by column vectors $u = \begin{pmatrix} u^{(1)} \\ \vdots \\u^{(n)} \end{pmatrix}$. For use in the proof of Theorem \ref{augment}, we follow Day--Putman \cite{DayPutman} and define the \emph{rank of $u$} to be $|u^{(n)}|$, and denote it by $R(u)$. Let $\mathcal{Y}$ denote the full subcomplex of $\mathcal{B}_3(\Z)$ spanned by $\pbc_3(\Z)$ and vertices $u$ for which $u^{(1)}$ and $u^{(2)}$ are odd and $u^{(3)}$ is even. We call $\mathcal{Y}$ the \emph{augmented partial $\pi$-basis complex for $\Z^3$}. We now demonstrate that $\mathcal{Y}$ is simply-connected.
\begin{theorem}\label{augment}The complex $\mathcal{Y}$ is simply-connected. \end{theorem}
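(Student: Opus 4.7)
My plan is to prove that $\mathcal{Y}$ is simply-connected by a rank-reduction induction on simplicial loops, using the even division algorithm of Lemma~\ref{conggens} as the central technical tool. For a simplicial loop $\omega = u_0 - u_1 - \cdots - u_k = u_0$ in $\mathcal{Y}$, define its complexity to be the lexicographically-ordered pair $(N(\omega), m(\omega))$, where $N(\omega) = \max_i R(u_i)$ and $m(\omega) = |\{i : R(u_i) = N(\omega)\}|$. I show by induction on complexity that any loop $\omega$ with $N(\omega) \geq 1$ is homotopic within $\mathcal{Y}$ to a loop of strictly smaller complexity.

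A preliminary mod-$2$ analysis catalogues the $2$-simplices of $\mathcal{Y}$: since the vertex mod-$2$ types occurring in $\mathcal{Y}$ are precisely $v_1, v_2, v_3, v_1 + v_2$, the $3$-element subsets forming a basis of $(\Z/2)^3$ are exactly $\{v_1, v_2, v_3\}$, $\{v_1, v_3, v_1+v_2\}$, and $\{v_2, v_3, v_1+v_2\}$. In particular, every $2$-simplex of $\mathcal{Y}$ contains a $v_3$-type vertex, so any edge $\{u, v\}$ of $\mathcal{Y}$ not already involving $v_3$ can only be completed to a $2$-simplex by adding a $v_3$-type vertex.

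For the reduction step, let $u = u_i$ realize $N(\omega) = N \geq 1$ with $\omega$-neighbors $v = u_{i-1}$ and $w = u_{i+1}$. Extend $\{u, v\}$ to a $2$-simplex $\{u, v, z\}$ of $\mathcal{Y}$ with $z$ chosen so that $0 < |z^{(3)}| \leq N$. Apply the even division algorithm to write $u^{(3)} = 2q\,z^{(3)} + r$ with $|r| < |z^{(3)}|$, and set $u' := u - 2qz$. Since $2qz \equiv 0 \pmod{2\Z^3}$, the vector $u'$ has the same mod-$2$ reduction as $u$, hence $u' \in \mathcal{Y}$; moreover $\Z u' + \Z v + \Z z = \Z u + \Z v + \Z z = \Z^3$, so $\{u', v, z\}$ is also a $2$-simplex of $\mathcal{Y}$. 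Gluing $\{u, v, z\}$ and $\{u', v, z\}$ along their shared edge $\{v, z\}$ produces a quadrilateral disk in $\mathcal{Y}$ with boundary $v - u - z - u' - v$, providing a homotopy that replaces the sub-path $v - u$ of $\omega$ by $v - u' - z - u$ (where $R(u') < N$). A symmetric surgery at the $u$-$w$ side excises $u$ from $\omega$, strictly reducing $m(\omega)$.

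The base case $N(\omega) = 0$ needs separate treatment, since no $2$-simplex of $\mathcal{Y}$ has all three vertices of rank $0$; I handle it by a secondary induction on $|u^{(2)}|$, following the layered structure of the proof of Lemma~\ref{conggens}. The principal obstacle is guaranteeing the existence of the auxiliary vertex $z$ with all the required properties: $z \in \mathcal{Y}$, $\{u, v, z\}$ a genuine $\Z$-basis of $\Z^3$, and $|z^{(3)}|$ appropriately small. When no valid $z$ exists within $\pbc_3(\Z)$, the augmented vertices of $\mathcal{Y}$ (those with $u^{(1)}, u^{(2)}$ odd and $u^{(3)}$ even) supply the missing $2$-simplex completions, and this is precisely why $\mathcal{Y}$ has been defined as an augmentation of $\pbc_3(\Z)$ by these extra vertices.
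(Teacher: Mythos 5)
Your overall strategy---push the third-coordinate rank of the vertices of a loop down by subtracting even multiples, with the augmented vertices absorbing a parity obstruction---is the right one, and it is essentially the strategy of the paper (which adapts Day--Putman's Theorem B). But the write-up has genuine gaps at exactly the two delicate points.

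First, the surgery you describe never removes the maximal-rank vertex $u$ from the loop. Gluing $\{u,v,z\}$ to $\{u',v,z\}$ along $\{v,z\}$ replaces the edge $v-u$ by the path $v-u'-z-u$, so $u$ is still a vertex of the new loop; the ``symmetric surgery at the $u$--$w$ side'' is not specified, and there is no reason the auxiliary vertices produced on the two sides are adjacent to each other, or that $u'$ is adjacent to $w$. So neither $N(\omega)$ nor $m(\omega)$ visibly decreases and the induction does not close. What is actually needed is a path from $v$ to $w$ inside $\lk_{\mathcal{Y}}(u)$, so that the segment $v-u-w$ can be coned off $u$ entirely, and only then are the ranks of the new intermediate vertices reduced by subtracting multiples of $u$. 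The paper obtains this by showing $\lk_{\mathcal{Y}}(u)\cong\lk_{\mathcal{Y}}(v_3)$ (the group generated by $\Gamma_3[2]$ and $E$ acts vertex-transitively) and describing $\lk_{\mathcal{Y}}(v_3)$ explicitly to see it is connected; that same computation also disposes of what you flag as the ``principal obstacle,'' the existence of a suitable auxiliary vertex, which your proposal never resolves.

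Second, the even division $u^{(3)}=2q\,z^{(3)}+r$ with $|r|<|z^{(3)}|$ is not always available: if both third coordinates are even and one divides the other with odd quotient, no even quotient leaves a remainder of smaller absolute value. This is the crux case, and it is where the augmented vertices are genuinely needed---not, as you suggest, to supply missing $2$-simplex completions $z$, but as the landing spots of the reduction itself: there one must subtract an \emph{odd} multiple, which drops the rank to $0$ but changes the mod-$2$ class to $v_1+v_2$. Omitting this parity case analysis omits precisely the reason the theorem holds for $\mathcal{Y}$ but fails for $\pbc_3(\Z)$ (cf.\ Corollary~\ref{not1conn}). A minor point: your base case is easier than you make it, since a loop all of whose vertices have rank $0$ lies in $\lk_{\mathcal{Y}}(v_3)$ and cones off to $v_3$; no secondary induction on $|u^{(2)}|$ is needed.
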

\begin{proof}By Theorem 2.5 of Charney \cite{Charney}, we know that $\pbc_3(\Z)$ is 0-connected, and hence so is $\mathcal{Y}$. To show that $\mathcal{Y}$ is simply-connected, we adapt the proof of Theorem B of Day--Putman \cite{DayPutman}.

Let $u$ be a vertex of a simplicial complex $C$. The \emph{link of $u$ in $C$}, denoted $\lk_C(u)$, is the full subcomplex of $C$ spanned by vertices joined by an edge to $u$. Let $v_3 \in \Z^3$ be the standard basis vector with third entry 1 and 0s elsewhere. Observe that for any vertex $u \in \mathcal{Y}$, we have $\lk_\mathcal{Y} (u) \cong \lk_\mathcal{Y}(v_3)$. This is because the group generated by $\Gamma_3[2]$ and the matrix \[ E = \begin{bmatrix}  1 & 0 & 0 \\ 1 & 1 & 0 \\ 0 & 0 & 1 \end{bmatrix}\] acts simplicially on $\mathcal{Y}$ and transitively on the $0$-skeleton of $\mathcal{Y}$. This action is transitive on vertices because $\Gamma_3[2]$ acts transitively on the vertices of $\pbc_3(\Z)$, and any vertex of $\mathcal{Y} \setminus \pbc_3(\Z)$ may be taken to a vertex of $\pbc_3(\Z)$ by acting on it with $E$.

We begin by establishing that $\lk_\mathcal{Y} (v_3)$ is connected (and hence, by the above, so is the link of any vertex of $\mathcal{Y}$). By considering what the columns of $M \in \mathrm{GL}(3,\Z)$ whose final column is $v_3$ must look like, we see that a necessary and sufficient condition for $\begin{pmatrix} u^{(1)} \\ u^{(2)} \\u^{(3)} \end{pmatrix}$ to be a member of $\lk_\mathcal{Y}(v_3)$ is that $\begin{pmatrix} u^{(1)} \\ u^{(2)} \end{pmatrix}$ is a vertex of $\mathcal{B}_2(\Z)$. The link $\lk_\mathcal{Y} (v_3)$ may thus be described as follows: it has one vertex for each pair $(a, b)$, where $a$ is a vertex of $\mathcal{B}_2(\Z)$ and $b \in 2 \Z$, with vertices $(a, b)$ and $(c,d)$ joined by an edge if and only if $a$ and $c$ are joined by an edge in $\mathcal{B}_2(\Z)$. Hence $\lk_\mathcal{Y}(v_3)$ is connected, though note that its fundamental group is an infinite rank free group.

Now, let $\omega \in \pi_1( \mathcal{Y}, v_3)$. We represent $\omega$ by the sequence of vertices \[ w_0 - w_1 - \ldots - w_r, \] where $w_i$ ($1 \leq i \leq r$) are vertices of $\mathcal{Y}$, and $w_0 = w_r = v_3$. Our goal is to systematically homotope this loop so that the rank of each vertex in the sequence is 0. Such a loop may be contracted to the vertex $v_3$, and so is trivial in $\pi_1(\mathcal{Y})$.

Consider a vertex $w_i$ for some $1 < i < r$, with $R(w_i) \neq 0$. Since $\lk_\mathcal{Y}(w_i)$ is connected, there is some path \[w_{i-1} - q_1 - q_2 - \ldots - q_s - w_{i+1}\] in $\lk_\mathcal{Y}(w_i)$, as seen in Figure \ref{hmtpy}.
 \begin{figure}
\centering
\def\svgwidth{4.5in}
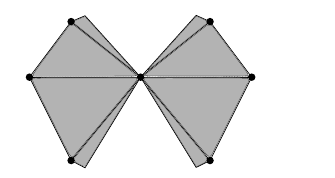
\caption{We find two homotopic paths that bound a disk inside $\lk_{\mathcal{Y}}(w_i)$, where the `upper' path seen here is constructed so that $R(\tilde q_j) < R(q_j)$ for $1 \leq j \leq s$.}
\label{hmtpy}
\end{figure}Fix attention on some $q_j$ ($1 \leq j \leq s$). By the Division Algorithm, there exists $a_j, b_j \in \Z$ such that $R(q_j) = a_j \cdot R(w_i) + b_j$ such that $0 \leq b_j < R(w_i)$. As in the proof of Lemma \ref{conggens}, we wish to ensure that $a_j$ is even, if possible. In all but one case, we will be able to rewrite the Division Algorithm as $R (q_j) = A_j \cdot R(w_i) + B_j$, for some $A_j, B_j \in \Z$ such that $A_j$ is even and $0 \leq |B_j| < R(w_i)$. We do a case-by-case parity analysis. Note that since $q_j$ and $w_i$ are joined by an edge, $R(q_j)$ and $R(w_i)$ cannot both be odd, otherwise $q_j$ and $w_i$ would both map to the same member of $(\Z / 2)^3$ when we reduce their entries mod 2. This would prohibit $\{q_j, w_i \}$ from extending to a basis $J$ of $\Z^3$, otherwise the image of $J$ in $(\Z / 2)^3$ would generate despite only having 2 members. If $R(q_j)$ and $R(w_i)$ have different parities and $a_j$ is odd, we may take $A_j = a_j + 1$ and $B_j = b_j - R(w_i)$. In that case, $|B_j| < R(w_i)$, since $b_j$ must be odd and hence non-zero. If both $R(q_j)$ and $R(w_i)$ are even, we may still do this, unless $b_j = 0$.

We now associate to each $q_j$ a new vertex, $\tilde q_j$, defined by
\[ \tilde q_j = \begin{cases} q_j - a_j \cdot w_i & \phantom{\phi \phi \phi} \mbox{ if } a_j \mbox{ even,}\\
 q_j - A_j \cdot w_i & \phantom{\phi \phi \phi} \mbox{ if } a_j \mbox{ odd, } b_j \neq 0,
\\ q_j - a_j \cdot w_i & \phantom{\phi \phi \phi} \mbox{ if } a_j \mbox{ odd, } b_j = 0 \end{cases}. \] Note that when $b_j = 0$, $R(\tilde q_j) = 0$, and under the conditions given, $\tilde q_j$ is always well-defined as a vertex of $\mathcal{Y}$. The path
\[w_{i-1} - q_1 - \ldots - q_s - w_{i+1}\] is homotopic inside $\lk_\mathcal{Y}(w_i)$ to the path
\[w_{i-1} - \tilde q_1 - \ldots - \tilde q_s - w_{i+1} ,\] as seen in Figure \ref{hmtpy}. By construction, $R(\tilde q_j) < R(w_i)$. Iterating this procedure continually homotopes $\omega$ until it is inside the contractible (full) subcomplex spanned by $v_3$ and $\lk_\mathcal{Y}(v_3)$, and hence is trivial. Therefore $\pi_1(\mathcal{Y}) = 1$.
\end{proof}
\textbf{The complex $\pbc_3(\Z)$ is not simply-connected.} It may be tempting to try to use the method in the above proof to show that $\pbc_3(\Z)$ is simply-connected, however we know by Corollary \ref{not1conn} that $\pbc_3(\Z)$ has non-trivial fundamental group. The obstruction to the above proof going through occurs when defining $\tilde q_j$ in the case that $a_j$ is odd and $b_j = 0$, as $\tilde q_j \not \in \pbc_3 (\Z)$. When $a_j$ is odd and $b_j = 0$, there is no even multiple of $w_i$ that can be added to $q_j$ to decrease its rank, so this method of homotoping loops to a point will not work.

\textbf{Presenting $\Gamma_3[2]$.} Let $\Gamma_3[2](w_1, \ldots, w_k)$ denote the stabiliser of the ordered tuple \linebreak $(w_1, \ldots, w_k)$ of vertices of $\mathcal{Y}$. Having demonstrated that $\mathcal{Y}$ is simply-connected, we now turn our attention to the obvious action of $\Gamma_3[2]$ on $\mathcal{Y}$. This action is simplicial, does not invert edges, and the quotient complex under the action is contractible, as seen in Figure \ref{quofig}. The quotient lifts to a subcomplex $W$ of $\mathcal{Y}$ via the vertex labels seen in Figure \ref{quofig}. This subcomplex is what Brown \cite{Brownpres} refers to as a \emph{fundamental domain} for the action, and so a theorem of Brown \cite[Theorem 3]{Brownpres} allows us to conclude that $\Gamma_3[2]$ is the free product of the stabilisers of the vertices of $W$ modulo \emph{edge relations}, which identify the copies of the edge stabiliser $\Gamma_3[2](a,b)$ inside the vertex stabilisers $\Gamma_3[2](a)$ and $\Gamma_3[2](b)$, where $a, b \in \{v_1,v_2,v_3,v_1+v_2 \}$ are distinct.
\begin{figure}
\centering
\def\svgwidth{3.2in}
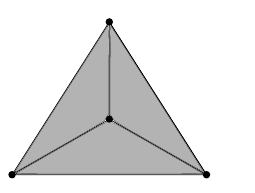
\caption{The quotient complex of $\mathcal{Y}$ under the action of $\Gamma_3[2]$. We have labelled its vertices using representatives from the vertex set of $\mathcal{Y}$.}
\label{quofig}
\end{figure}

 We obtain a finite presentation for $\Gamma_3[2](v_1)$ using the semi-direct production decomposition of $\Gamma_3[2](1)$ given by Lemma \ref{commdiag} (noting that $\Gamma_2[2] \cong \ppia_2$). The group $\Gamma_3[2](v_1)$ is generated by the set $\{O_2,O_3, S_{23}, S_{32}, S_{12}, S_{13} \}$, with a complete list of relators given by all relators of the form 1--9 (excluding 7, as it is not defined when $n=3$) seen in Corollary \ref{congpres}. By permuting the indices accordingly, we also obtain finite presentations for the stabiliser groups $\Gamma_3[2](v_2)$ and $\Gamma_3[2](v_3)$. Identifying the edge stabiliser subgroups of these three groups appropriately, we obtain the presentation seen in Corollary \ref{congpres} without relators 7 and 10; we denote this presentation by $\mathcal{P}$.
 
 We now see that the effect of identifying the edge stabiliser subgroups of $\Gamma_n[2](v_1+v_2)$ with the corresponding copies inside the other three vertex stabiliser groups is to include one additional relator: relator 10. Since $\Gamma_3[2](v_1+v_2)$ and $\Gamma_3[2](v_1)$ are conjugate inside $\mathrm{GL}(3,\Z)$, we take a formal presentation for $\Gamma_3[2](v_1+v_2)$ by adding a `hat' to each of the symbols in the presentation of $\Gamma_3[2](v_1)$

The members of $\Gamma_3[2](v_1+v_2)$ are not, however, strings of formal symbols, but are members of $\Gamma_3[2]$. To express them as such, we observe that \[ \Gamma_3[2](v_1+v_2) = E_{21} \cdot \Gamma_3[2](v_1) \cdot {E_{21}}^{-1}, \] where $E_{21}$ is the elementary matrix with 1 in the $(2,1)$ position. In Table \ref{e1conj}, we see the conjugates of the generators of $\Gamma_3[2](v_1)$ by $E_{21}$. These give expressions for the formal symbols generating $\Gamma_3[2](v_1+v_2)$. For example, \[\hat S_{12} = E_{21} S_{12} {E_{21}}^{-1} = O_1O_2 S_{21}{S_{12}}^{-1}.\]
\begin{table}[h!!!!!!]
\centering
\begin{tabular}{|c|c|}\hline
Generator $M$ of $\Gamma_3[2](v_1)$ & The conjugate $\hat M = E_{21} \cdot M \cdot {E_{21}}^{-1}$ \\
\hline $O_2$ & $S_{21}O_2$ \\
$O_3$ & $O_3$ \\
$S_{12}$ & $O_1O_2 S_{21}{S_{12}}^{-1}$ \\
$S_{13}$ & $S_{13}S_{23}$ \\
$S_{23}$ & $S_{23}$ \\
 $S_{32}$ & $S_{32}{S_{31}}^{-1}$ \\
 \hline
\end{tabular}
\caption{The conjugates of the generating set of $\Gamma_3[2](v_1)$ by $E_{21}$.}
\label{e1conj}
\end{table}

Let $f_i$ be the edge joining $v_1 +v_2$ to $v_i$ ($1 \leq i \leq 3$), and let $J_i$ be the stabiliser of $f_i$. We consider these each in turn. Observe that \[J_2 = E_{21} \cdot \Gamma_3[2](v_1,v_2) \cdot {E_{21}}^{-1}, \] so $J_2$ is generated by $\{O_3, S_{13}S_{23}, S_{23} \}$. We have expressed those three generators in terms of the generators of $\Gamma_3[2](v_1)$. To obtain the relations corresponding to this edge stabiliser, we must express them using the generators of $\Gamma_3[2](v_1+v_2)$, and set them to be equal accordingly. Consulting Table \ref{e1conj}, we get the edge relations $\hat O_3 = O_3$, $\hat S_{13} = S_{13}S_{23}$ and $\hat S_{23} = S_{23}$. Note that these relations simply reiterate the expressions we had already determined for $\hat O_3$, $\hat S_{13}$ and $\hat S_{23}$. Similarly, as we obtain $J_3$ by conjugating $\Gamma_3[2](v_1,v_3)$ by $E_{21}$, the edge relations arising from the edge $f_3$ are $\hat O_2 = S_{21}O_2$, $\hat S_{12} = O_1O_2S_{21}{S_{12}}^{-1}$ and $\hat S_{32} = S_{32}{S_{31}}^{-1}$.

Finally, to obtain $J_1$, we conjugate $\Gamma_3[2](v_1,v_2)$ by the elementary matrix $E_{12}$. We obtain that $J_1$ is generated by $\{O_3, S_{13}, S_{13}S_{23} \}$, which gives edge relations $\hat O_3 = O_3$, $S_{13} = \hat S_{13} {\hat S_{23}}^{-1}$ and $\hat S_{13} = S_{13}S_{23}$. Note that these relations all arise as consequences of the edge relations coming from the edges $f_2$ and $f_3$, so are not required.

We now use the above edge relations to replace the formal relators defining $\Gamma_3[2](v_1+v_2)$ with words on the generating set $\{S_{ij}, O_k \}$. Using Tietze transformations and Brown's Theorem 3 \cite{Brownpres}, we may then conclude that a complete presentation for $\Gamma_3[2]$ is obtained by adding these relators to the presentation $\mathcal{P}$. For example, the relator $\hat{O_2}^2$ becomes $(S_{21}O_2)^2$. All but one of these additional relators are consequences of ones already in $\mathcal{P}$. The one relator that is not is $[\hat S_{13}, \hat S_{32}] \hat{S_{12} }^{-2}$, which becomes
\[ [S_{13}S_{23}, S_{32}{S_{31}}^{-1}] (O_1O_2 S_{21}{S_{12}}^{-1})^{-2}. \] Using the other relations in $\Gamma_3[2]$, this word may be rewritten in the form of relator 10 in Corollary \ref{congpres}; we have thus verified that the presentation given in Corollary \ref{congpres} is correct when $n=3$.
\bibliography{palbib}

\end{document}